\documentclass[12pt]{amsart}
\usepackage{amssymb,amsmath,amsthm,amsxtra}

\usepackage{colonequals}
\usepackage{fullpage}
\usepackage{hyperref}
\hypersetup{colorlinks=true,urlcolor=blue,citecolor=blue,linkcolor=blue}
\usepackage{mathtools}
\usepackage{comment}
\usepackage[percent]{overpic}
\usepackage{multirow}

\usepackage{xstring} 

\numberwithin{equation}{section}

\newtheorem{thm}[equation]{Theorem}

\newtheorem{proposition}[equation]{Proposition}

\theoremstyle{definition}

\theoremstyle{remark}
\newtheorem{remark}[equation]{Remark}

\usepackage[section]{algorithm}
\usepackage{algpseudocode}
\makeatletter
\let\c@algorithm\c@equation 
\makeatother
\makeatletter
\newenvironment{breakablealgorithm}
  {
   \begin{center}
     \refstepcounter{algorithm}
     \vspace{1ex}
     \hrule height.8pt depth0pt \kern2pt
     \renewcommand{\caption}[2][\relax]{
       {\raggedright\textbf{\ALG@name~\thealgorithm .} ##2\par}%
       \ifx\relax##1\relax 
         \addcontentsline{loa}{algorithm}{\protect\numberline{\thealgorithm}##2}%
       \else 
         \addcontentsline{loa}{algorithm}{\protect\numberline{\thealgorithm}##1}%
       \fi
       \kern2pt\hrule\kern2pt
     }
  }{
     \kern2pt\hrule\relax
   \end{center} \vspace{1ex}
  }
\makeatother

\DeclareMathOperator{\GL}{GL}

\newcommand{\psmod}[1]{~(\textup{\text{mod}}~{#1})}

\newenvironment{enumalph}
{\begin{enumerate}}
{\end{enumerate}}

\newenvironment{enumalg}
{\begin{enumerate}}
{\end{enumerate}}

\newenvironment{enumalg2}
{\begin{enumerate}}
{\end{enumerate}}

\numberwithin{equation}{section}



\newcommand{\FF}{\mathbb{F}}
\newcommand{\ZZ}{\mathbb{Z}}
\newcommand{\QQ}{\mathbb{Q}}
\newcommand{\RR}{\mathbb{R}}
\newcommand{\OO}{\mathcal{O}}

\newcommand{\F}{\FF}
\newcommand{\Q}{\QQ}
\newcommand{\Qbar}{\QQ^{\textup{al}}}
\newcommand{\R}{\RR}
\newcommand{\Z}{\ZZ}

\DeclareMathOperator{\Aut}{Aut}
\DeclareMathOperator{\End}{End}
\DeclareMathOperator{\Frob}{Frob}
\DeclareMathOperator{\Gal}{Gal}
\DeclareMathOperator{\Jac}{Jac}
\DeclareMathOperator{\Hom}{Hom}
\DeclareMathOperator{\M}{M}
\DeclareMathOperator{\SL}{SL}
\DeclareMathOperator{\tr}{tr}

\newcommand{\frakl}{\mathfrak{l}}
\newcommand{\frakp}{\mathfrak{p}}

\newcommand{\defi}[1]{\textsf{#1}} 


\newcommand{\PP}{\mathbb{P}}
\newcommand{\C}{\mathbb{C}}
\newcommand{\GQ}{\Gal_{\Q}}
\newcommand{\newf}{\mathcal{N}}
\newcommand{\T}{\mathbb{T}}
\newcommand{\g}{\mathfrak{g}}
\newcommand{\p}{\mathfrak{p}}
\newcommand{\eps}{\varepsilon}
\newcommand{\mact}{\big\vert}
\newcommand{\smat}[4]{\left( \begin{smallmatrix} #1 & #2 \\ #3 & #4 \end{smallmatrix} \right)}

\newcommand{\mfurl}[1]{\StrSubstitute{#1}{.}{/}[\mfslash]}
\newcommand{\mfref}[1]{\mfurl{#1}\href{http://www.lmfdb.org/ModularForm/GL2/Q/holomorphic/\mfslash}{\textsf{#1}}}

\begin{document}

\title{A Prym variety with everywhere good reduction over~\texorpdfstring{$\Q(\sqrt{61})$}{Qsqrt61}}

\author{Nicolas Mascot}
\address{Department of Mathematics, American University of Beirut, Beirut 1107 2020, Lebanon}
\email{nm116@aub.edu.lb}
\urladdr{\url{https://staff.aub.edu.lb/~nm116/}}

\author{Jeroen Sijsling}
\address{Universit\"at Ulm, Institut f\"ur Reine Mathematik, D-89068 Ulm, Germany}
\email{jeroen.sijsling@uni-ulm.de}
\urladdr{\url{https://jrsijsling.eu/}}

\author{John Voight}
\address{Mathematics Department, Dartmouth College, Hanover, NH 03755}
\email{jvoight@gmail.com}
\urladdr{\url{http://www.math.dartmouth.edu/~jvoight/}}



\date{\today}


\begin{abstract}
  We compute an equation for a modular abelian surface~$A$ that has everywhere good reduction over the quadratic field~$K = \Q(\sqrt{61})$ and that does not admit a principal polarization over~$K$.
\end{abstract}

\maketitle

\section{Introduction} \label{intro}

A foundational theorem of Abrashkin \cite{Abrashkin} and Fontaine \cite{Fontaine} asserts that there is no abelian variety over~$\Q$ with everywhere good reduction; this theorem is an analogue in arithmetic geometry of Minkowski's theorem that every number field has a ramified prime.  More generally, the finiteness of the set of isomorphism classes of abelian varieties over a given number field of given dimension and with good reduction outside a given finite set is a key ingredient in the proof of Faltings \cite{Faltings} that a curve of genus at least two defined over a number field has finitely many rational points.  For this reason and many others, it remains an important task to explicitly study abelian varieties over number fields with controlled ramification \cite{BK,Schoof}.

Moving beyond elliptic curves, Demb\'el\'e--Kumar \cite{DK} have given examples of abelian surfaces with everywhere good reduction over real quadratic fields.  More recently, Demb\'el\'e \cite{Dembele} gave a classification of abelian varieties with everywhere good reduction over~$\Q(\sqrt{d})$ with~$d=53,61,73$: assuming the generalized Riemann hypothesis (GRH), he proves that every such abelian variety is isogenous to the power of a unique abelian surface~$A_d$.  Moreover, for~$d=53,73$ he was able to find an explicit equation for a genus~$2$ curve whose Jacobian is isogenous to~$A_d$.  In this article, we finish off the case~$d=61$ as follows.

\begin{thm} \label{thm:main}
  Let~$K \colonequals \Q(\sqrt{61})$, let~$\nu \colonequals (1+\sqrt{61})/2$, and let~$X$ be the projective plane curve over~$K$ defined by the equation~$F(x,y,z)=0$ where
    \begin{equation} \label{eqn:Fxyz}
      \begin{aligned}
        F(x,y,z) & \colonequals (5\nu + 17)x^4 - (14\nu + 48)x^2y^2 + (8\nu + 28)x^2yz   \\
        &\qquad - (4\nu + 14)x^2z^2 + (10\nu + 33)y^4 - (12\nu + 44)y^3z \\
        &\qquad + (2\nu + 26)y^2z^2 + (4\nu - 16)yz^3 + (\nu - 6)z^4
    \end{aligned}
  \end{equation}
  Then the following statements hold.
  \begin{enumalph}
    \item The curve~$X$ is smooth of genus~$3$ and has bad reduction at the prime~$(2)$ only.
    \item The curve~$X$ admits an involution~$\iota \colon (x: y: z) \mapsto (-x: y: z)$ over~$K$, and the quotient of~$X$ by~$\iota$ defines a map from~$X$ onto an elliptic curve~$E$ of conductor~$(64)$. An equation for~$E$ is given by
       \[ -(10\nu+34)y^2 = x^3 - \nu x^2 + (-\nu+1)x - 1. \]
    \item Up to isogeny over~$K$, we have~$\Jac(X) \sim A \times E$, where~$A$ is the Prym variety of the natural map $\Jac (X) \to \Jac (E)$, an abelian surface over~$K$ with everywhere good reduction.
    \item All geometric endomorphisms of~$A$ are defined over~$K$, and~$\End(A) \simeq \Z[\sqrt{3}]$ so $A$ is of~$\GL_2$-type over~$K$.  Moreover,~$A$ is isogenous to~$A_{61}$ over~$K$.
    \item The abelian surface~$A$ has a polarization over~$K$ of type~$(1,2)$, but no abelian surface isogenous to~$A$ over~$K$ admits a principal polarization over~$K$.
  \end{enumalph}
\end{thm}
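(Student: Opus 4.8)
The plan is to establish the two assertions separately; the first is essentially classical, while the second rests on an arithmetic obstruction supplied by the narrow class group of $\Q(\sqrt3)$.

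For the existence of a $(1,2)$-polarization over $K$: by part~(c), $A$ is the Prym variety of the bielliptic double cover $\pi\colon X\to E$ attached to $\iota$, which by Riemann--Hurwitz is ramified at exactly $4$ points. Over $K$ one embeds $A$ into $\Jac(X)$ as the connected component of $\ker(\pi_{*})$ (equivalently, as the image of $1-\iota^{*}$), and restricts the canonical principal polarization $\Theta$ of $\Jac(X)$ along this embedding; since the restriction of an ample class to a subvariety is ample, this produces a polarization $\lambda\colon A\to A^{\vee}$ defined over $K$. It then suffices to check that $\lambda$ has degree $4$, for a polarization of an abelian surface has degree $(d_{1}d_{2})^{2}$ and $4$ forces the type to be $(1,2)$. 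This degree computation is classical for bielliptic curves of genus~$3$ --- indeed this is the standard source of $(1,2)$-polarized abelian surfaces --- and I would either invoke it or run the short comparison of $\Theta$ with the product polarization under the isogeny $A\times E\sim\Jac(X)$, using that $\pi^{*}$ scales the polarization of the elliptic factor by $\deg\pi=2$.

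For the absence of a principal polarization in the $K$-isogeny class, let $B/K$ be an abelian surface that is $K$-isogenous to $A$. A principal polarization of $B$ over $K$ is in particular one of $B_{\C}$ over $\C$, so it is enough to show $B_{\C}$ admits no principal polarization at all. By part~(d), $\End^{0}_{\C}(A)=F:=\Q(\sqrt3)$ is a field, hence $A_{\C}$ and therefore $B_{\C}$ is simple, with $\End^{0}_{\C}(B)=F$ and $\End_{\C}(B)=\OO:=\Z+f\OO_{F}$ for some integer $f\ge 1$, where $\OO_{F}=\Z[\sqrt3]$ is the maximal order and $F$ has class number $1$. As $F$ is totally real, the Rosati involution attached to any polarization of $B_{\C}$ is trivial on $F$; consequently every polarization of $B_{\C}$ is $\OO$-linear, its kernel is isomorphic to $(\OO/\mathfrak c)^{2}$ for an integral $\OO$-ideal $\mathfrak c$ whose norm squared is the degree, and --- the key structural point --- the class of $\mathfrak c$ in the narrow Picard group $\operatorname{Pic}^{+}(\OO)$ does not depend on the polarization. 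Calling this class $\mathfrak t(B)$, the surface $B_{\C}$ has a principal polarization if and only if $\mathfrak t(B)$ is trivial.

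The final step is to compute $\mathfrak t(B)$. Using the complex uniformization $B_{\C}=\C^{2}/\Lambda$ and the trace pairing identifying $B_{\C}^{\vee}$, one finds that the image of $\mathfrak t(B)$ under the natural surjection $\operatorname{Pic}^{+}(\OO)\twoheadrightarrow\operatorname{Cl}^{+}(F)$ is the class of the different $\mathfrak d_{F}$: the conductor contributes only the totally positive principal ideal $(f)$, and the Steinitz class of $\Lambda$ maps to the trivial class of $\operatorname{Cl}(F)$ since $\operatorname{Cl}(F)=1$. But $\operatorname{Cl}^{+}(\Q(\sqrt3))\cong\Z/2\Z$ --- the class number is $1$, yet the fundamental unit $2+\sqrt3$ has norm $+1$, so $\Q(\sqrt3)$ has no unit of norm $-1$ --- and $[\mathfrak d_{F}]=[(2\sqrt3)]$ is its nontrivial element, because every generator of $\mathfrak d_{F}$ has norm $-12$ and hence fails to be totally positive. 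So $\mathfrak t(B)\ne 1$ for every $B$ isogenous to $A$ over $K$, and none of these admits a principal polarization over $K$. This is consistent with the first assertion: for $B=A$, the $(1,2)$-polarization has type ideal $\mathfrak c$ of norm $2$, namely the ramified prime above $2$, whose class is exactly the nontrivial element of $\operatorname{Cl}^{+}(F)$.

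I expect the last step to be the main obstacle: making precise --- over a general base and for a possibly non-maximal order $\OO$ --- the dictionary between $\OO$-linear polarizations of real-multiplication abelian surfaces and narrowly polarized $\OO$-modules, and in particular tracking how the different of $\OO$ and the conductor enter, so as to locate the image of $\mathfrak t(B)$ in $\operatorname{Cl}^{+}(F)$. A convenient feature of this route is that it needs no enumeration of the (infinite) $K$-isogeny class, and that descending the question from $K$ to $\C$ avoids comparing N\'eron--Severi groups over $K$ with those over $\overline{K}$.
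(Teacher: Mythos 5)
Your proposal addresses only part (e) of the theorem, taking (a)--(d) as given; in the paper those parts carry most of the work (a Gr\"obner-basis verification of smoothness and of the reduction behaviour, the explicit quotient to $E$, rigorous certification of $\End(\Jac(X))$ following Costa--Mascot--Sijsling--Voight, and a Faltings--Serre argument identifying $\rho_{A,\mathfrak{l}}$ with $\rho_{f_K,\mathfrak{l}}$ to deduce everywhere good reduction). Within (e), your first half is fine: the restriction of the theta polarization of $\Jac(X)$ to the Prym of the bielliptic cover $X \to E$ has kernel isomorphic to $\ker(\Theta|_E) \simeq (\Z/2\Z)^2$, hence degree $4$ and type $(1,2)$; this is essentially how the paper (following Demb\'el\'e--Kumar) obtains the polarization.

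The second half has a genuine gap at the step you yourself flag as the main obstacle. You compute the narrow class $\mathfrak{t}(B) \in \operatorname{Cl}^+(F)$ of the polarization module as the class of the different times the Steinitz class of $\Lambda$, and you kill the latter because $\operatorname{Cl}(F)=1$. But the Steinitz class lives in the \emph{wide} class group, whereas $\mathfrak{t}(B)$ lives in $\operatorname{Cl}^+(F)$: the positivity structure on the rank-one module of symmetric $\OO$-linear maps $B \to B^{\vee}$ depends on the complex structure of $B$ and is exactly the datum lost under $\operatorname{Cl}^+(F)\to\operatorname{Cl}(F)$, whose kernel here has order $2$ --- precisely the ambiguity you must resolve. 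Indeed, if $\mathfrak{t}(B)$ were always $[\mathfrak{d}_F]\neq 1$, then no abelian surface with real multiplication by $\Z[\sqrt{3}]$ would ever be principally polarizable, contradicting the existence of genus-$2$ Jacobians with such multiplication (the discriminant-$12$ Hilbert modular surface has plenty of points). What actually pins down $\mathfrak{t}(A)=[(1+\sqrt{3})]\neq 1$ is the concrete $(1,2)$-polarization from your first half --- your closing ``consistency check'' is in fact the proof, not a corollary. One must then still propagate nontriviality through the entire $K$-isogeny class, since $\mathfrak{t}$ is not obviously an isogeny invariant. The paper does both steps at once: given an isogeny $\varphi\colon A'\to A$ with $A'$ principally polarized, the pullback $\varphi^{\vee}\circ\lambda\circ\varphi$ would correspond, by Gonz\'alez--Gu\`ardia--Rotger, to a totally positive element of $\End(A')$ of norm $2\deg(\varphi)^2$, which is impossible because $2$ is not a norm from $\Q(\sqrt{3})$ (the quaternion algebra $(2,3\,|\,\Q)$ is division). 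Your framework can be repaired along exactly these lines, but as written the key identity computing $\mathfrak{t}(B)$ is false.
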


The abelian surface~$A_{61}$ arises naturally as an isogeny factor over~$\Q(\sqrt{61})$ of a modular abelian fourfold associated via the Eichler--Shimura construction to the Hecke irreducible space~$S_2(\Gamma_0(61),\chi)$ of classical modular forms of level~$61$ with quadratic character~$\chi$, a space having nontrivial inner twist.  Using the Atkin--Lehner involution and explicit period computations, Demb\'el\'e--Kumar \cite[Remark 3]{DK} observed that~$A_{61}$ is naturally~$(1,2)$-polarized but not principally polarized (using a general criterion of Gonz\'alez--Gu\`ardia--Rotger \cite{GGR}); consequently, this surface was an outlier for their method (the ``sole exception'' in the range of their computations).  
Note that although the full classification results of Demb\'el\'e are conditional on the GRH, the realization in Theorem \ref{thm:main} of the particular abelian surface~$A$ uses a direct computation, making it unconditional.

We construct~$X$ as follows.  First, following Demb\'el\'e's analysis \cite{Dembele}, we consider the modular abelian~$4$-fold~$A_{61}$ attached via the Eichler--Shimura construction to the space~$S_2(\Gamma_0(61), \chi)$.  We compute the period lattice of~$A_{61}$ to large precision.  Adapting the methods of Costa--Mascot--Sijsling--Voight \cite{CMSV}, we then numerically split~$A_{61}$ as the square of a simple abelian surface with a~$(1,2)$-polarization---a \emph{building block} in the language of Ribet and Pyle \cite{Pyle,Ribet-Twists}---providing a numerical candidate for~$A$.  We then glue with the elliptic curve~$E$ (in Theorem \ref{thm:main}(b)) via analytic methods to find a curve of genus three over~$K$, but with a terrible equation.  Finally, we simplify the cover to obtain~$X \to E$ as in Theorem \ref{thm:main} and confirm that the Prym of this cover has everywhere good reduction as claimed.

\subsection*{Contents}
The paper is organized as follows.  In Section \ref{sec:periods}, we explain explicit methods to compute the period matrix of a modular abelian variety to large precision.  In Section \ref{sec:block}, we illustrate our adapted methods to numerically decompose a complex abelian variety with a focus on our example.  In Section \ref{sec:prym}, we realize the resulting building block as a Prym explicitly; then in Sections \ref{sec:simplify} we simplify the equations for the curve~$X$ and verify the properties announced in Theorem \ref{thm:main}.

\subsection*{Acknowledgments}
The authors would like to thank Lassina Demb\'el\'e for comments.  Sijsling was supported by a Juniorprofessurprogramm ``Endomorphismen algebraischer Kurven'' of the Science Ministry of Baden-W\"urttemberg. Voight was supported by a Simons Collaboration grant (550029).  This work was sparked during the conference \emph{Arithmetic of Low-Dimensional Abelian Varieties} at the Institute for Computational and Experimental Research in Mathematics (ICERM) in June 2019; the authors would like to thank ICERM for their hospitality and support.

\section{Computing the period matrix} \label{sec:periods}

In this section, we show how to compute the period matrix of our modular abelian variety to large precision.  We begin by recalling how this variety shows up as a factor in the decomposition up to isogeny of the Jacobian of the modular curve~$X_1(61)$. After this, we explain how to determine the periods of a newform of weight 2 in terms of its~$q$-expansion. Finally, we show how to compute many coefficients of this~$q$-expansion in reasonable time, so as to be able to compute the period matrix with very high accuracy.  (The current implementation in \textsc{Magma} \cite{Magma} allows computations to low precision; this is not enough for our purposes.)

\subsection*{Conventions}
Before we begin, we set up a few conventions we use throughout.  Let~$K$ be a field with algebraic closure~$K^{\textup{al}}$.  For an abelian variety~$A$ over~$K$, we denote by~$\End(A)$ the endomorphisms of~$A$ defined over~$K$; if we wish to consider endomorphisms over a field extension~$L \supseteq K$, we write~$A_L$ for the base change of~$A$ to~$L$ and write~$\End(A_L)$ for endomorphisms over~$L$.

\subsection*{Setup}

Let~$k \in \Z_{\geq 0}$ and~$n,N \in \Z_{>0}$ be integers. We denote by~$\sigma_k(n) \colonequals \sum_{d \mid n} d^k$ the sum of the~$k$-th powers of the positive divisors of~$n$. In particular,~$\sigma_0(n)$ is the number of divisors of~$n$.

Given a subgroup~$H \leq (\Z/N\Z)^\times$, let
\[ \Gamma_H(N) \colonequals \left\{\begin{pmatrix} a & b \\ c & d \end{pmatrix} \in \SL_2(\Z) \, : \, \text{$c \equiv 0 \psmod N$ \ and \ $a,d \bmod N \in H$}\right\}. \]
By a \defi{newform} of level~$\Gamma_H(N)$, we mean a newform of level~$\Gamma_1(N)$ whose nebentypus character~$\eps \colon (\Z/N\Z)^\times \to \C^\times$ satisfies~$H \leq \ker \eps$.  Let~$\newf_k(\Gamma_H(N))$ denote the finite set of newforms of weight~$k$ and level~$\Gamma_H(N)$.  This set is acted on by~$\GQ$, the absolute Galois group of~$\Q$, (on the left) via the coefficients of~$q$-expansions at the cusp~$\infty$: if~$f(q)=\sum_{n=1}^\infty a_n(f) q^n$ lies in~$\newf_k(\Gamma_H(N))$, so that in particular $a_1=1$, then the coefficients~$\{a_n(f)\}_n$ are algebraic integers, and for~$\tau \in \GQ$ we have~$\tau f \in \newf_k(\Gamma_H(N))$ with~$q$-expansion~$(\tau f)(q) = \sum_{n} \tau(a_n(f)) q^n$.
Let~$\GQ\!\backslash \newf_k(\Gamma_H(N))$ be the set of Galois orbits of~$\newf_k(\Gamma_H(N))$. For each~$f \in \newf_k(\Gamma_H(N))$, we denote by~$K_f \colonequals \Q(\{a_n(f)\}_n)$ the number field generated by the coefficients of~$f$.  The field~$K_f$ contains the values of the nebentypus character~$\eps_f$ of~$f$; furthermore the nebentypus of~$\tau(f)$ is~$\eps_{\tau(f)} = \tau(\eps_f)$ for all~$\tau \in G_\Q$.

Let~$\mathcal{H}^* \colonequals \mathcal{H} \cup \PP^1(\Q)$ denote the completed upper half-plane.  Recall that for~$N \in \Z_{>0}$ and~$H \leq (\Z/N\Z)^\times$, the modular curve~$X_H(N) \colonequals \Gamma_H(N) \backslash \mathcal{H}^*$ attached to the congruence subgroup~$\Gamma_H(N)$ is defined over~$\Q$, and its Jacobian~$J_H(N)$ decomposes up to isogeny over~$\Q$ as
\begin{equation}
J_H(N) \sim \prod_{M \mid N} \prod_{f \in \newf_2(\GQ\!\backslash \Gamma_{H_M}(M))} A_f^{\sigma_0(N/M)}, \label{eqn:decomp_mod_jac}
\end{equation}
where~$H_M \leq (\Z/M\Z)^\times$ denotes the image of~$H$ in~$(\Z/M\Z)^\times$.  Moreover, for each Galois orbit~$f \in \GQ\!\backslash \newf_2(\Gamma_H(N))$, the abelian variety~$A_f$ is simple over~$\Q$ with~$\dim A_f = [K_f : \Q]$, and $A_f$ has endomorphism algebra~$\End(A_f)\otimes \Q \simeq K_f$.

Let~$\T$ be the Hecke algebra of weight~$2$ and level~$\Gamma_H(N)$. Roughly speaking,~$A_f$ can be thought of as the piece of~$J_H(N)$ on which~$\T$ acts with the same eigenvalue system as~$f$. More precisely,~$A_f$ is defined by
\begin{equation}
A_f \colonequals J_1(N) / I_f J_1(N),
\end{equation}
where~$I_f \colonequals \{ T \in \T  : Tf = 0\}$ is the annihilator of~$f$ under~$\T$, so that for instance the image of~$T_n \vert_{A_f} \in \End(A_f)$ under the isomorphism~$\End(A_f)\otimes \Q \simeq K_f$ is the coefficient~$a_n(f) \in K_f$. In particular, up to isogeny, the matrix
\[ \left( \begin{array}{ccc} & \vdots & \\ \cdots & \displaystyle \int_{\gamma_j} \tau(f)(z) \, \mathrm{d}z & \cdots \\ & \vdots & \end{array}  \right) \]
whose rows are indexed by the newforms~$\tau(f)$ in the Galois orbit of~$f$ and whose columns are indexed by a~$\Z$-basis~$(\gamma_j)_j$ of
\[ H_1(X_H(N),\Z)[I_f] = \{ \gamma \in H_1(X_H(N),\Z) : \text{$T \gamma = 0$ for all~$T \in I_f$} \} \]
is a period matrix of~$A_f$.

It is therefore possible to extract a period matrix of~$A_f$ out of a period matrix of the modular curve~$X_H(N)$. While it may seem excessive to work with the periods of the whole Jacobian~$J_H(N)$ instead of~$A_f$, this approach presents the advantage of making it easy to compute explicitly in the~$\T$-module~$H_1(X_H(N),\Z)$ using modular symbols.

\subsection*{Our modular Jacobian factor}\label{sect:Decomp_JH}

In what follows, we use LMFDB \cite{LMFDB} labels for our classical modular forms.  In our case, we observe that the Galois orbit of newforms attached to~$A_{61}$ is
\[ f \colonequals \mfref{61.2.b.a} = q+\alpha q^2 + (\alpha^2+3)q^3 + O(q^4) \]
where~$\alpha^4+8\alpha^2+13=0$; the four newforms in this orbit correspond to the four embeddings of~$\Q(\alpha) \hookrightarrow \C$, and their nebentypus is the quadratic character~$\chi_{61}$. Thus~$N=61$ and~$H = \ker \chi_{61}$, the subgroup of squares of~$(\Z/61\Z)^\times$.

This Galois orbit~\mfref{61.2.b.a} happens to be the only one with quadratic nebentypus, whereas there are two Galois orbits with trivial nebentypus, namely
\[ \mfref{61.2.a.a} = q-q^2-2q^3 +O(q^4) \]
whose coefficient field is~$\Q$, and
\[ \mfref{61.2.a.b} = q + \beta q^2 + (-\beta^2+3) q^3 + O(q^4) \]
whose coefficient field is~$\Q(\beta)$ where~$\beta^3-\beta^2-3\beta+1=0$. Since~$61$ is prime and since there are no cuspforms of weight~2 and level~1, the decomposition~\eqref{eqn:decomp_mod_jac} informs us that~$J_H(61)$ is isogenous to the product of an elliptic curve corresponding to~\mfref{61.2.a.a}, an abelian 3-fold corresponding to~\mfref{61.2.a.b}, and our 4-fold~$A_{61}$ corresponding to~\mfref{61.2.b.a}. It also follows that the modular curve~$X_H(61)$ has genus~$8$, which is reasonable for practical computations.

In order to compute the period lattice of~$A_{61}$, we must first isolate the rank-8 sublattice~$H_1(X_H(61),\Z)[I]$ of the rank-16 lattice~$H_1(X_H(61),\Z)$, where~$I \leq \T$ is the annihilator of \mfref{61.2.b.a}. We can obtain this sublattice as
\[ H_1(X_H(61),\Z)[I] = \bigcap_{T \in I} \ker( T \vert_{H_1(X_H(61),\Z)}). \]
We observe that the Hecke operator~$T_2^4+8 T_2^2 + 13$ lies in~$I$, since~$T_2 f = a_2(f) f = \alpha f$ and~$\alpha^4+8\alpha^2+13=0$; furthermore, the kernel of this operator acting on~$H_1(X_H(61),\Z)$ happens to have rank 8, so that it agrees with the sublattice~$H_1(X_H(61),\Z)[I]$.

\begin{remark}
The fact that the hunt for this sublattice was so short-lived is not a coincidence. Indeed, since~$61$ is prime and since there is no cuspform of weight 2 and level 1, all the eigenforms of level 61 are new, so that~$\T \otimes \Q$ is a semi-simple~$\Q$-algebra by the multiplicity-one theorem. It is therefore \'etale, so that most of its elements generate it as a~$\Q$-algebra; in fact, an element of~$\T \otimes \Q$ is a generator if and only if its eigenvalues on each of the Galois conjugates of \mfref{61.2.a.a}, \mfref{61.2.a.b}, and \mfref{61.2.b.a} are all distinct. In particular, it is not surprising to find that~$\T \otimes \Q = \Q[T_2]$, so that~$I \otimes \Q$ is generated by~$T_2^4+8 T_2^2 + 13$, which explains why taking the kernel of this operator led us directly to the correct sublattice.
\end{remark}

\subsection*{Twisted winding elements}

We now recall a formula that we will use to compute the periods~$\int_\gamma f(z) \, \mathrm{d}z$ of a newform~$f \in \newf_2(\Gamma_H(N))$ for~$\gamma$ ranging over a~$\Z$-basis of~$H_1(X_H(N),\Z)$.

Since~$f$ is an eigenform, it is actually enough to compute these integrals for~$\gamma$ ranging over a generating set of~$H_1(X_H(N),\Z)$ as a~$\T$-module, since
\[ \int_{T \gamma} f(z) \, \mathrm{d}z = \int_\gamma (T f)(z) \, \mathrm{d}z = \lambda_T(f) \int_\gamma f(z) \, \mathrm{d}z \]
for all~$T \in \T$, where~$\lambda_T(f)$ is such that~$Tf = \lambda_T(f) f$.

Our strategy consists in integrating term-by-term the~$q$-expansion of~$f$ along modular symbols chosen so that the resulting series has good convergence properties. We choose to use the so-called \emph{twisted winding elements}, that is to say modular symbols of the form
\[ s_\chi = \sum_{a \bmod m} \overline \chi(-a) \{ \infty, a/m \} \]
where~$\chi$ is a primitive Dirichlet character whose modulus~$m$ is coprime to~$N$.

Indeed, let~$\eps_f$ be the nebentypus of~$f$, and~$\lambda_f \in \C^\times$ be the Fricke pseudo-eigenvalue of~$f$, i.e.
\begin{equation}
f \mact \smat{0}{-1}{N}{0} = \lambda_f \sum_{n=1}^{\infty} \overline{a_n(f)} q^n; \label{eqn:WN_eigenval}
\end{equation}
if~$N$ is squarefree, the exact value of~$\lambda_f$ can be read off the coefficients~$a_n(f)$ of~$f$ \cite[Theorem 2.2.1]{Nicolas2}; in any case, a numerical approximation of~$\lambda_f$ can be obtained by evaluating~\eqref{eqn:WN_eigenval} at a particular point of the upper half plane.
A computation \cite[Proposition 3.2.3]{Nicolas2} based on the fact that the twist~$f \otimes \chi \colonequals \sum_{n=1}^{\infty} a_n(f) \chi(n) q^n$ of~$f$ by~$\chi$
is modular of level~$m^2 N$ and on the identity
\[ \int_\infty^0 = \int_\infty^{\frac{i}{m\sqrt{N}}} + \int_{\frac{i}{m\sqrt{N}}}^0 \]
yields the formula
\begin{equation}
\int_{s_\chi} f(z) \, \mathrm{d}z = \frac{m}{2 \pi i} \sum_{n=1}^{\infty} \frac1n \left(\frac{\chi(n) a_n(f)}{\g(\chi)}-\chi(-N) \eps_f(m) \lambda_f \frac{\overline{\chi(n) a_n(f)}}{\g(\overline \chi)}\right) R^n \label{eqn:series_twisted_periods}
\end{equation}
where~$R \colonequals e^{-2\pi/m\sqrt{N}}$ and~$\g(\chi)$ denotes the Gauss sum
\[ \g(\chi) \colonequals \sum_{a \in \Z/m\Z} \chi(a) e^{2 a \pi i / m}  \]
of~$\chi$ (and the same with~$\overline \chi$).

In view of the Deligne bound~$a_n(f) \ll n^{\frac12+\eps}$, it is necessary to sum numerically the series~\eqref{eqn:series_twisted_periods} up to~$n \approx \frac{m \sqrt{N} \log 10}{2 \pi} D$ in order to compute this integral with~$D$ correct decimal digits.
For instance, the homology of the curve~$X_H(61)$ introduced above happens to be generated as a~$\T$-module by the~$s_{\chi}$ for~$\chi$ ranging over the set of primitive characters of modulus~$m \leq 7$; therefore, in order to compute the period lattice of~$A_{61}$ with 1000 digits of accuracy, we need to compute the coefficients~$a_n(f)$ for~$n$ up to approximately~$20000$. We show how this can be done in reasonable time below.

\begin{remark}
For safety, we actually computed the coefficients~$a_n(f)$ and summed the series up to~$n \le 24000$. This~$20\%$ increase is quite arbitrary, but at this point we do not need to have a very tight control over the accuracy of our computations. Note that since we sum the series~\eqref{eqn:series_twisted_periods} using floating-point arithmetic and since its terms are roughly geometrically decreasing in magnitude, summing them in reverse (from~$n=24000$ to~$n=1$) increases numerical stability.
\end{remark}

\subsection*{High-accuracy \texorpdfstring{$q$}{q}-expansions}

Classical algorithms, such as that based on modular symbols or that based on trace formulas for Hecke operators, compute the coefficients~$a_n(f)$ of a newform for~$n \leq B$ in complexity which is at least quadratic in~$B$. Since we are aiming for~$B = 24000$, these algorithms are not suitable for our purpose. We therefore turn to the method presented by Mascot \cite[3.1]{Nicolas1}, which can compute the first~$B$ coefficients~$a_n(f)$ in complexity which is quasi-linear in~$B$.

This method is based on the observation that any two rational functions~$u$ and~$v$ on an algebraic curve~$X$ are necessarily algebraically dependent, since the transcendence degree of the function field of~$X$ is only~$1$. There must therefore exist a polynomial~$\Phi(x,y)$ satisfying~$\Phi(u,v)=0$. Besides, this polynomial can be taken to have degree at most~$\deg v$ in~$x$ and~$\deg u$ in~$y$; indeed, a given value of~$u$ generically corresponds to~$\deg u$ points of~$X$, which in turn yield at most~$\deg u$ values of~$v$.

Let $f \in \newf_k(\Gamma_H(N))$, let~$K_f$ be its coefficient field, and let $\OO_f$ be the ring of integers of~$K_f$. Our method thus consists in using such a relation between the inverse of the~$j$-invariant
\begin{equation} 
u \colonequals 1/j = \frac{E_4^3-E_6^2}{(12 E_4)^3} \in q\Z[[q]]
\end{equation}
where
\[ \quad E_4 = 1+240 \sum_{n=1}^{\infty} \sigma_3(n) q^n, \quad E_6 = 1-504 \sum_{n=1}^{\infty} \sigma_5(n) q^n, \]
and the rational function $v \colonequals f/u' \in \OO_f[[q]]$ where $u' \colonequals q\,\mathrm{d}u/\mathrm{d}q$.

The first~$B$ coefficients of $u$ can be computed in quasi-linear complexity in~$B$ using the sieve of Eratosthenes and fast series arithmetic, and we can then determine the first~$B$ coefficients of~$v$ and hence of $f=vu'$ by Newton iteration using~$\Phi(u,v)=0$.

In order to increase the efficiency of this computation, we run it modulo a large enough prime number~$p$ which splits completely in the coefficient field~$K_f$ of~$f$. More precisely, let~$(\omega_j)_{j \leq [K_f:\Q]}$ be a~$\Z$-basis of~$\OO_f$, so that each element of~$\OO_f$ may be uniquely written as~$\sum_j \lambda_j \omega_j$ with~$\lambda_j \in \Z$ for all~$j$. The matrix
\[ \left( \begin{array}{ccc} & \vdots & \\ \cdots & \tau(\omega_j) & \cdots \\ & \vdots & \end{array}  \right) \]
whose rows (resp. columns) are indexed by the complex embeddings $\tau : K_f \hookrightarrow \C$ (resp. the integers $1 \leq j \leq [K_f:\Q]$) is square, and invertible since the square of its determinant is the discriminant of $K_f$. Let $(c_{j,\tau})_{j,\tau}$ be the coefficients of its inverse, and define
\[ C \colonequals \max_j \sum_{\tau} \vert c_{j,\tau} \vert. \]
Then for all~$r >0$ and for any element~$\theta = \sum_j \lambda_j \omega_j \in \OO_f$, we have
\begin{equation} 
\vert \tau(\theta) \vert \leq r \text{ for all } \tau : K_f \hookrightarrow \C \ \ \Longrightarrow \ \ \vert \lambda_j \vert \leq Cr \text{ for all }  j.
\end{equation}

Since for all~$n \in \Z_{\geq 1}$, the coefficient~$a_n(f) \in \OO_f$ has absolute value at most~$\sigma_0(n) \sqrt{n}$ under every embedding~$\tau : K_f \hookrightarrow \C$, if we require that
\begin{equation}
p \ge 1+ 2 \, C \max_{n \le B} \sigma_0(n) \sqrt{n}, \label{eqn:p_Deligne}
\end{equation}
then for all $n \le B$ we can recover the exact value of $a_n(f) = \sum_j \lambda_{j,n} \omega_j$ from the $\lambda_{j,n} \bmod p$, since we will always have $p \geq 2 \vert \lambda_{j,n} \vert +1$.

\bigskip

Our method is summarized as follows; for more detail, see Mascot \cite[3.1]{Nicolas1}, \cite[3.3]{Nicolas2}.

\begin{breakablealgorithm}
  \caption{High-accuracy~$q$-expansion in quasi-linear time}
  \label{algo:qexp}
  \begin{flushleft}
    \textbf{Input:} A newform~$f$ and~$B \in \Z_{\geq 1}$.

    \textbf{Output:} The coefficients~$a_n(f)$ for~$n \le B$.

    \textbf{Algorithm:}
  \end{flushleft}
  \begin{enumalg}
    \item Evaluate the degrees of the rational functions~$u=1/j$ and~$v=f/u'$ (where $u'=q\,\mathrm{d}q/\mathrm{d}u$) on the modular curve~$X_H(N)$ in terms of the index~$[\operatorname{SL}_2(\Z) : \Gamma_H(N)]$ and of the ramification of the $j$-invariant map.
    \item Use Riemann-Roch to find a bound~$b \in \Z_{\geq 1}$ such that any linear combination of the~$u^m v^n$ with~$m \leq \deg v$ and~$n \leq \deg u$ whose first~$b$~$q$-expansion coefficients vanish is necessarily the 0 function.
    \item Compute the coefficients~$a_n(f)$ for~$n \leq b$ using one of the quadratic algorithms mentioned above.
    \item Pick a prime~$p$ satisfying \eqref{eqn:p_Deligne} and which splits completely in~$K_f$, and compute the first~$B$ coefficients of~$u \bmod p$ using fast series arithmetic in~$\F_p[[q]]$.
    \item For each prime~$\p \mid p$ of~$K_f$, in parallel:
    \begin{enumalg2}
    \item By linear algebra, find coefficients~$a_{m,n} \in \F_\p$ such that the first~$b$ coefficients of \[ \sum_{m \leq \deg v} \sum_{n \leq \deg u} a_{m,n} u^m v^n \]
    are all~$0 \bmod \p$, and define
    \[ \Phi_\p(x,y) = \sum_{m \leq \deg v} \sum_{n \leq \deg u} a_{m,n} x^m y^n \in \F_p[x,y]. \]
    \item Use Newton iteration in~$\F_p[[q]]$ over the second variable of the equation~$\Phi_p(u,v)=0 \bmod \p$ to find the first~$B$ coefficients of~$v \psmod{\p}$.
  \end{enumalg2}
  \item Use CRT over the primes~$\p$ to determine the first~$B$ coefficients of~$v \psmod{p \OO_f}$.
  \item Express the first~$B$ coefficients of~$f=vu' \psmod{p \OO_f}$ on the~$\Z/p\Z$-basis~$(\omega_j)_{j \leq [K_f:\Q]}$ of~$\OO_f/p\OO_f$, and deduce their exact value thanks to Deligne's bounds.
  \end{enumalg}
\end{breakablealgorithm}

\begin{remark}
Although the complexity of this method is quasilinear in~$B$, the initial computation of the~$a_n(f)$ for~$n \leq b$ and the linear algebra required to find the coefficients~$a_{m,n}$ of~$\Phi_\p(x,y)$ represent a significant overhead if the degrees of~$u$ and~$v$ are large. In our case, it is more efficient to proceed as follows:
\begin{enumerate}
\item Compute the first~$B$ coefficients of the newform~$f_0=\mfref{61.2.a.a} = q-q^2-2q^3 +O(q^4)$ introduced above using algorithm~\ref{algo:qexp}. This is faster than for~$f$, since~$f_0$ has trivial nebentypus, so that instead of~$X_H(61)$, we can work on the curve~$X_0(61)$, where the degree of~$u=1/j$ is smaller.
\item Compute the first~$B$ coefficients of~$f$ using a variant of Algorithm~\ref{algo:qexp} based on a relation between the functions~$u=1/j$ and~$v=(f/f_0)^2$. Since the nebentypus of~$f$ has order 2, this can again be done on the curve~$X_0(61)$.
\end{enumerate}
In general, if we wanted the coefficients of more modular forms (e.g., the other newform \mfref{61.2.a.b} of trivial nebentypus), it is better to proceed inductively by replacing the~$j$-invariant with forms that have already been computed (such as~$f_0$ and~$f$), since this leads to polynomials~$\Phi_\p(x,y)$ of much lower degree \cite[3.3]{Nicolas2}.
\end{remark}

Using this method, we can obtain the coefficients~$a_n(f)$ for~$n \leq 24000$ as elements of~$K_f=\Q(\alpha)$, where~$\alpha^4+8\alpha^2+13=0$. We then deduce the complex coefficients of each of the newforms~$\tau(f)$ in the Galois orbit of~$f$ by applying each of the complex embeddings of~$K_f$ to these coefficients.
It then remains to evaluate the integrals~$\int_{s_\chi} \tau(f)(z)\,\mathrm{d}z$ using~\eqref{eqn:series_twisted_periods}, and then to extract the periods of our abelian 4-fold~$A_{61}$, as explained at the beginning of this section.

\section{Isolating the building block} \label{sec:block}

The result of the algorithms in the previous sections is a~$4 \times 8$ period matrix~$\Pi \in \M_{4,8} (\C)$ with respect to some basis of the homology of the corresponding abelian 4-fold~$A_{61}$. In this section, we find the period sublattice corresponding to an abelian surface as isogeny factor.

\subsection*{Finding a polarization}

The homology basis used for the period computations in the previous section need not, in general, be symplectic. Indeed, the computation of the intersection pairing on the homology of a modular curve is currently implemented only for the case of~$X_0(N)$.  Therefore, we start this section by briefly describing how to numerically recover a polarization for~$\Pi$.

\begin{remark}
Demb\'el\'e--Kumar \cite[\S 4.1.1, Method 1]{DK} showed that when the character is quadratic (as in our case), the Atkin--Lehner involution splits the modular abelian variety, giving a polarization whose degree is a power of~$2$.  In cases where the resulting polarization is not principal, they search (in the Hecke algebra) for a suitable principal polarization, so the method we give can be seen as a more general version of this approach.
\end{remark}

More generally, let~$\Pi \in \M_{g,2 g} (\C)$ be a period matrix, with corresponding abelian variety~$A = V / \Lambda$, where~$V = \C^g$ and~$\Lambda = \Pi \Z^{2 g}$. Having~$\Pi$ at our disposal yields an identification of the homology group~$H_1 (A, \Z)$ with~$\Z^{2 g}$.  Multiplication by~$i$ on~$A$ yields an endomorphism~$J$ of~$H_1 (A, \R)$, which we similarly identify with the unique matrix~$J \in \M_{2 g} (\R)$ obtained from the equality \cite[\S 6.1]{BSSVY}
\begin{equation}
  i \Pi = \Pi J .
\end{equation}
Via the above identification and the Riemann relations \cite[4.2.1]{BL}, finding a polarization for~$A$ comes down to finding an alternating matrix~$E \in \M_{2 g} (\Z)$ satisfying~$J^t E J = E$ (where~${}^t$~denotes transpose) and the positive definiteness condition~$i \Pi E^{-1} \Pi^* > 0$ (where~${}^*$~denotes the conjugate transpose).

We first focus on the linear equations
\begin{equation}\label{eq:pollincond}
  \begin{aligned}
    E^t = -E \\
    J^t E J = E .
  \end{aligned}
\end{equation}
Let~$m=g(2g-1)$.  We choose a~$\Z$-basis of size~$m$ of alternating matrices~$\M_{2 g} (\Z)_{\textup{alt}}$ (e.g., indexed by entries above the diagonal).  Numerically solving the equations \eqref{eq:pollincond} then reduces to finding common approximate zeros of a number of linear forms~$\ell_1, \dots, \ell_{m} : \Z^m \to \R$. We determine a~$\Z$-basis of vectors~$v$ for which all~$\ell_i (v)$ are numerically negligible by LLL-reducing the lattice spanned by the rows of the matrix
\begin{equation}
  \left(
  \begin{array}{cccc}
    \multirow{3}{*}{$I_{m}$} &      |   &        &    |     \\
                             & C \ell_1 & \cdots & C \ell_m \\
                             &      |   &        &    |
  \end{array}
  \right) .
\end{equation}
Here~$I_{m}$ is the~$m$-by-$m$ identity matrix, and the other columns correspond to the linear forms~$\ell_i$ multiplied by a sufficiently large scalar~$C \in \R_{\gg 0}$. The first elements of the resulting LLL-reduced basis will give approximate solutions of \eqref{eq:pollincond}, and we preserve those that do so to large enough precision to get a~$\Z$-basis of numerical solutions for \eqref{eq:pollincond}.

Having found a~$\Z$-basis of solutions of \eqref{eq:pollincond}, we take small linear combinations  until we find an element~$E$ of determinant~$1$ such that the Hermitian matrix~$i \Pi E^{-1} \Pi^*$ is positive definite. We summarize our steps in Algorithm \ref{alg:pps}.

\begin{breakablealgorithm}
  \caption{Numerically recovering principal polarizations}
  \label{alg:pps}
  \begin{flushleft}
    \textbf{Input:} A period matrix~$\Pi \in \M_{g,2 g} (\C)$, not necessarily with respect to a symplectic basis.

    \textbf{Output:} An alternating matrix~$E \in \M_{2 g} (\Z)$ defining a principal polarization for the abelian variety corresponding to~$\Pi$.

    \textbf{Algorithm:}
  \end{flushleft}
  \begin{enumalg}
    \item Use LLL to find a~$\Z$-basis~$\left\{ E_1, \dots, E_d \right\}$ of solutions of \eqref{eq:pollincond} in~$\M_{2 g} (\Z)_{\textup{alt}}$.
    \item \label{alg:numer_recov_pol:step2} Take a (random)~$\Z$-linear combination~$E = \sum_{i = 1}^d c_i E_i$, with~$c_i \in \Z$.
    \item If~$\det (E) = 1$ and~$i \Pi E^{-1} \Pi^*$ is positive definite, return~$E$; else, return to Step \ref{alg:numer_recov_pol:step2}.
  \end{enumalg}
\end{breakablealgorithm}

In this way, we quickly recover a principal polarization~$E$ on~$\Pi$.

\begin{remark}
More generally, this method can be used to search for polarizations of any desired type.
\end{remark}

For our modular abelian fourfold~$A=A_{61}$, whose period matrix~$\Pi \in \M_{4,8}(\C)$ was computed in the previous section, we carry out Algorithm \ref{alg:pps} and recover (numerically) a principal polarization on~$A$.  

\subsection*{Finding small idempotents}

With a polarization in hand, we numerically decompose the abelian variety~$A=A_{61}$, following Costa--Mascot--Sijsling--Voight \cite{CMSV}: we compute numerically that~$\End (A_\C)$ is an order of index~$13$ in~$\M_2 (\Z [\sqrt{3}])$.  Using methods similar to those above, we find a~$\Z$-basis~$\left\{ (T_1, R_1), \dots, (T_8, R_8) \right\}$ for the pairs~$(T, R)$ with~$T \in \M_4 (\C)$ and~$R \in \M_8 (\Z)$ such that numerically
\begin{equation}
  T \Pi = \Pi R.
\end{equation}
Specifically,~$R_1=1$, and the remaining matrices~$R_i$ are
\begin{small}
\setlength\arraycolsep{1.5pt}
\begin{gather*}
\begin{pmatrix}
-1 & 0 & 0 & 1 & 1 & 0 & 0 & 0 \\
-1 & -1 & -1 & 0 & 1 & -1 & 2 & 0 \\
1 & 0 & 0 & 0 & -1 & 1 & 0 & 2 \\
-1 & 0 & -1 & 0 & -1 & -1 & -1 & 0 \\
1 & 0 & 1 & 0 & 1 & 1 & 1 & 0 \\
-1 & 0 & -1 & -1 & -1 & -2 & -1 & -2 \\
-1 & 0 & -1 & 0 & -1 & -1 & -1 & 0 \\
1 & 0 & 1 & 0 & 1 & 1 & 1 & 0
\end{pmatrix},
\begin{pmatrix}
-2 & 0 & -1 & 0 & 0 & -2 & -1 & -2 \\
1 & 0 & 0 & 1 & 1 & 0 & 1 & 2 \\
-1 & -1 & -1 & -2 & -1 & 0 & 0 & 0 \\
0 & -1 & -2 & 0 & 0 & 0 & 0 & 0 \\
1 & 1 & 1 & 1 & -1 & 2 & 1 & 2 \\
-1 & -1 & -1 & 0 & -1 & 0 & -1 & 0 \\
-1 & -1 & 0 & 1 & 1 & -2 & 0 & 0 \\
1 & 2 & 2 & 0 & 1 & 1 & 1 & 0
\end{pmatrix},
\begin{pmatrix}
0 & 0 & 0 & 1 & 1 & 0 & 0 & 0 \\
0 & 0 & 0 & 0 & 2 & 0 & 2 & 0 \\
0 & 0 & 0 & 1 & 0 & 0 & 1 & 2 \\
0 & 0 & 0 & 1 & 0 & 0 & 1 & 2 \\
0 & 0 & 0 & -2 & -1 & 0 & -1 & -2 \\
-1 & 0 & -1 & 0 & -1 & -1 & 0 & 0 \\
0 & 0 & 0 & 2 & 0 & 0 & 0 & 2 \\
0 & 0 & 0 & -2 & 0 & 0 & -1 & -3
\end{pmatrix}, \\
\begin{pmatrix}
-1 & 0 & 0 & -1 & 0 & -2 & -1 & -2 \\
0 & 0 & -1 & -2 & -1 & 1 & -2 & -1 \\
0 & -1 & -1 & 1 & 0 & 0 & 1 & 1 \\
-1 & -1 & 0 & 0 & 0 & 0 & 1 & 1 \\
1 & 1 & 0 & -1 & -1 & 0 & -1 & -1 \\
-1 & 0 & 0 & 1 & 0 & 1 & 1 & 2 \\
-1 & -1 & 0 & 1 & 0 & 0 & 0 & 1 \\
1 & 1 & 0 & -1 & 0 & 0 & -1 & -2
\end{pmatrix},
\begin{pmatrix}
-1 & 0 & 0 & 0 & 0 & 0 & 0 & 0 \\
-1 & -1 & -1 & -1 & -2 & -1 & -1 & -1 \\
0 & 0 & -1 & 1 & 0 & 0 & 0 & 1 \\
0 & 0 & 0 & -1 & -1 & 0 & 0 & 1 \\
2 & 0 & 2 & 1 & 2 & 2 & 1 & 1 \\
0 & 0 & 0 & 0 & 1 & -1 & 1 & 0 \\
-2 & 0 & -2 & -2 & -1 & -2 & -1 & -3 \\
1 & 0 & 1 & 1 & 0 & 1 & -1 & 0
\end{pmatrix}, \\
\begin{pmatrix}
1 & 0 & 1 & -1 & 0 & 0 & 0 & 0 \\
2 & 1 & 1 & -2 & 1 & 1 & -2 & -3 \\
-3 & -1 & -2 & -1 & -1 & -2 & -2 & -3 \\
2 & -1 & 0 & -1 & -1 & 2 & 0 & 1 \\
-1 & 1 & -1 & 1 & -1 & 0 & 1 & 1 \\
-1 & -1 & -1 & 0 & -2 & 1 & 0 & 2 \\
1 & -1 & 2 & 1 & 0 & 0 & -1 & 1 \\
-1 & 2 & 0 & 0 & 2 & -1 & 1 & -2
\end{pmatrix},
\begin{pmatrix}
1 & 0 & 0 & 0 & 0 & 0 & 0 & 0 \\
1 & 1 & 0 & -2 & 0 & -2 & -2 & -3 \\
1 & 1 & 1 & 1 & 1 & 1 & 1 & 1 \\
0 & 1 & 1 & 0 & 1 & -1 & 0 & -1 \\
-3 & -1 & -1 & 1 & -1 & 3 & 1 & 3 \\
0 & 0 & 1 & 2 & 1 & 1 & 2 & 2 \\
2 & -1 & 1 & 1 & 1 & -3 & 1 & -1 \\
-1 & 0 & -2 & -2 & -2 & 2 & -2 & 0
\end{pmatrix}.
\end{gather*}
\end{small}

The matrices~$R_i$ are a~$\Z$-basis for the numerical endomorphism ring~$\End(A_\C) \subset \M_8(\Z)$; we look for small idempotents therein as follows.  Inspired by algorithms for computing an isomorphism with the matrix ring \cite{Fisher,IRS}, we equip~$\M_8(\Z)$ with the Frobenius norm (sum of squares of entries)~$N \colon \M_8(\Z) \to \Z$, a positive definite quadratic form; then idempotents (and, more generally, zerodivisors) are likely to be short vectors with respect to this norm.  This very quickly identifies scores of idempotents
\begin{equation}
R_2+1, R_4+1, R_2-R_3+1, R_8, \dots
\end{equation}
and we select one that gives us the simplest looking decomposition.

\subsection*{Inducing the polarization}

Let~$(T, R)$ be a small idempotent in~$\End (A_\C) \otimes \Q$ as constructed above. We can then find an isogeny factor of~$A$ in two ways:
\begin{enumerate}
  \item Taking the kernel of~$T$ yields a linear subspace of~$W$ of~$\C^g$ in which~$W \cap \Pi \Z^{2 g}$ is a lattice. Choosing bases, we get a new period matrix~$\Pi'$ as well as a pair abusively still denoted~$(T, R)$ such that~$T \Pi' = \Pi R$. The matrix~$T$ represents the \emph{inclusion} of~$W \cong \C^{\dim (W)}$ into~$V \cong \C^g$.
  \item Alternatively, taking the image of~$T$ yields a linear subspace of~$W$ of~$\C^g$ in which~$T \Pi \Z^{2 g}$ is a lattice. Choosing bases, we get a new period matrix~$\Pi'$ as well as a pair abusively still denoted~$(T, R)$ such that~$T \Pi = \Pi' R$. The matrix~$T$ represents the \emph{projection} from~$V \cong \C^g$ to~$W \cong \C^{\dim (W)}$.
\end{enumerate}

Note that the dimensions in approaches (i) and (ii) above are complementary in the sense that they sum to~$g$.

\begin{proposition}
  In approach \textup{(i)}, the restriction of~$E$ to~$W$ is represented by the matrix~$E' \colonequals R^t E R$, and~$E'$ defines a polarization for~$\Pi'$.
\end{proposition}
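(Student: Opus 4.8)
The plan is to identify $E'$ as the pullback of the polarization $E$ to the abelian subvariety cut out by the idempotent, and then to reduce to the fact that a polarization restricts to a polarization on an abelian subvariety. Write $W \colonequals \ker(T) \subseteq V = \C^g$. Since $(T,R)$ is an idempotent in $\End(A_\C)\otimes\Q$, a suitable integer multiple of $(T,R)$ is an endomorphism with the same kernel on $V$, so by Poincar\'e reducibility $W$ is the tangent space of an abelian subvariety $B \subseteq A$; in particular $\Lambda' \colonequals W \cap \Lambda$ is a lattice of full rank $2\dim W$ in $W$, which is why the procedure produces a genuine period matrix $\Pi'$ of $B$. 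After the choices of bases, the (abusively renamed) matrix $T \in \M_{g,\dim W}(\C)$ is the analytic representation of the inclusion $\iota\colon B \hookrightarrow A$, and $R \in \M_{2g,2\dim W}(\Z)$ is its rational representation, i.e.\ the matrix of $H_1(B,\Z) = \Lambda' \hookrightarrow \Lambda = H_1(A,\Z)$ in the chosen $\Z$-bases; these satisfy $T\Pi' = \Pi R$. Granting this, the identity $E' = R^t E R$ is a tautology: it says exactly that $R^t E R$ is the Gram matrix, in the chosen basis of $\Lambda'$, of the restriction of the bilinear form $E$ to $\Lambda'$; equivalently, $E'$ represents $\iota^* E$.

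It then remains to check that $E'$ meets the conditions of \eqref{eq:pollincond} together with positive-definiteness, now relative to $\Pi'$ and its complex-structure matrix $J'$ defined by $i\Pi' = \Pi'J'$. Integrality is immediate since $R$ and $E$ are integral, and ${E'}^{t} = R^t E^t R = -R^t E R = -E'$ with vanishing diagonal because $E$ is alternating, so $E'$ is alternating. For ${J'}^{t} E' J' = E'$, first observe that combining $i\Pi = \Pi J$, $i\Pi' = \Pi'J'$, and $T\Pi' = \Pi R$ yields $\Pi(JR) = (i\Pi)R = T(i\Pi') = T\Pi'J' = \Pi(RJ')$; since $\Pi$ induces an $\R$-linear isomorphism $\R^{2g} \to \C^g$ and $JR, RJ'$ are real matrices, this forces $JR = RJ'$. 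Hence ${J'}^{t}E'J' = (RJ')^t E (RJ') = (JR)^t E (JR) = R^t (J^t E J) R = R^t E R = E'$, using $J^t E J = E$.

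The substantive step is the positivity $i\Pi' {E'}^{-1} {\Pi'}^{*} > 0$, which also subsumes the nondegeneracy of $E'$. I would argue this intrinsically rather than by manipulating $(R^t E R)^{-1}$, for which there is no clean formula in terms of $E^{-1}$ and $R$ when $R$ is not square. Recall from the Riemann relations \cite[4.2.1]{BL} (cf.\ \cite[\S 6.1]{BSSVY}) that for any period matrix $\Pi$ and any integral alternating $E$ with $J^t E J = E$, positive-definiteness of $i\Pi E^{-1}\Pi^*$ is equivalent to positive-definiteness of the Hermitian form $H$ on $\C^g$ whose imaginary part is the $\R$-bilinear extension of $E$ (pulled back to $\C^g$ along $\Pi$). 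Now $W = \ker(T)$ is a \emph{complex} subspace of $V = \C^g$, so it is invariant under the complex structure and the Hermitian form attached to $(\Pi', E')$ is the restriction $H|_W$ of $H$; a positive-definite Hermitian form restricted to a complex subspace is again positive-definite, hence $H|_W > 0$, and in particular $H|_W$, so also $E'$, is nondegenerate. Reading this back through the coordinates furnished by $\Pi'$ gives $i\Pi' {E'}^{-1} {\Pi'}^{*} > 0$, while ${J'}^{t} E' J' = E'$ was established above; therefore $E'$ is a polarization for $\Pi'$.

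The main obstacle is exactly this last step: one must resist a brute-force matrix computation and instead invoke the intrinsic Riemann form, using crucially that $W$ is a \emph{complex} subspace of $V$ (not merely a real one), so that the restricted form stays Hermitian and stays positive. A secondary point worth flagging is the tacit hypothesis that $\ker(T)$ is the tangent space of an abelian \emph{sub}variety---equivalently, that $W \cap \Lambda$ is a full lattice---which is precisely Poincar\'e reducibility applied to the idempotent $(T,R)$ and is what makes $\Pi'$ a period matrix in the first place; the same argument also shows that the polarization type of $E'$ is the one induced by $\iota^*$ on $H_1(B,\Z)$.
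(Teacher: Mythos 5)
Your proposal is correct and follows essentially the same route as the paper: $E'$ is visibly alternating and integral, the intertwining relation $T\Pi'=\Pi R$ forces $RJ'=JR$ and hence ${J'}^{t}E'J'=E'$, and positivity follows because the Hermitian form attached to $(\Pi,E)$ restricts to the complex subspace $W$ and restriction of a positive definite Hermitian form to a complex subspace stays positive definite (the paper cites Birkenhake--Lange for the equivalence with $i\Pi'{E'}^{-1}{\Pi'}^{*}>0$). Your added remarks---that $W\cap\Lambda$ is a full lattice by Poincar\'e reducibility and that $E'=R^tER$ is just the Gram matrix of $\iota^*E$---are correct elaborations of what the paper assumes in its setup for approach (i), not a different argument.
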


\begin{proof}
  It is clear that~$E'$ is alternating. The complex structure~$J$ restricts to give the complex structure on~$W$, in the sense that because~$T \Pi' = \Pi R$ we have
  \begin{equation}
    \Pi R J' = T \Pi' J' = i T \Pi' = i \Pi R = \Pi J R,
  \end{equation}
  so that~$R J' = J R$. Therefore~$E'$, as the restriction of~$E$ under~$R$, is again compatible with~$J'$, or more precisely
  \begin{equation}
    J'^t E' J' = J'^t R^t E R J' = R^t J^t E J R = R^t E R = E' .
  \end{equation}
  Similarly, since~$E'$ is the restriction of~$E$ under~$R$, the corresponding Hermitian form on~$W$ remains positive definite, which is equivalent to~$i \Pi' E'^{-1} \Pi'^* > 0$ (as in Birkenhake--Lange \cite[Proof of Lemma 4.2.3]{BL}).
\end{proof}

\begin{proposition}\label{prop:caseii}
In approach \textup{(ii)}, an integer multiple of~$E' \colonequals (R E^{-1} R^t)^{-1}$ induces a polarization on~$\Pi'$.
\end{proposition}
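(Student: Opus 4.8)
The plan is to verify directly that, for a suitable $n \in \Z_{>0}$, the matrix $nE'$ satisfies the Riemann relations with respect to $\Pi'$: that it is an integral alternating matrix, that $J'^t (nE') J' = nE'$ where $i\Pi' = \Pi' J'$, and that $i\Pi' (nE')^{-1} \Pi'^* > 0$. I keep the notation of approach~(ii), so that $T$ (now of size $h \times g$, with $h = \dim W$) represents the surjective projection $\C^g \twoheadrightarrow W \cong \C^h$, $R$ (now of size $2h \times 2g$) is the induced surjection on homology, and $T\Pi = \Pi' R$; here $\Pi'$ is an honest period matrix, so the associated map $\R^{2h} \to \C^h$ is an $\R$-linear isomorphism and the complex structure $J'$ satisfies $J'^2 = -I$.

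First I would record the positive-definiteness computation, which conveniently also shows that $E'$ is well-defined. Since $R$ is real and $T\Pi = \Pi' R$, we have $(\Pi' R)^* = R^t \Pi'^*$ and hence
\begin{equation*}
  \Pi' (R E^{-1} R^t) \Pi'^* = (\Pi' R) E^{-1} (\Pi' R)^* = (T\Pi) E^{-1} (T\Pi)^* = T (\Pi E^{-1} \Pi^*) T^* .
\end{equation*}
Writing $H \colonequals i \Pi E^{-1} \Pi^*$, which is positive definite by the Riemann relations for $E$, and using that $T^*$ is injective (as $T$ is surjective), we obtain $i \Pi' (R E^{-1} R^t) \Pi'^* = T H T^* > 0$. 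In particular $R E^{-1} R^t$, being a normalization of the imaginary part of a positive definite Hermitian form on $\C^h$ relative to the period matrix $\Pi'$, is non-degenerate (cf.\ Birkenhake--Lange \cite[4.2.1]{BL}), so $E' = (R E^{-1} R^t)^{-1}$ makes sense; it is alternating because $R E^{-1} R^t$ is (using $E^t = -E$), and we fix $n \in \Z_{>0}$ so that $nE'$ is integral (and alternating).

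It then remains to check compatibility with the complex structures. Multiplying $T\Pi = \Pi' R$ by $i$ and using $i\Pi = \Pi J$ and $i\Pi' = \Pi' J'$ gives $\Pi' R J = T\Pi J = i T \Pi = i \Pi' R = \Pi' J' R$; since $\Pi'$ is injective this yields $R J = J' R$. Separately, from the Riemann relation $J^t E J = E$ one gets, on taking inverses and using $J^2 = -I$ (so $J^{-1} = -J$ and $J^{-t} = -J^t$), the identity $J E^{-1} J^t = E^{-1}$. Combining these,
\begin{equation*}
  J' (R E^{-1} R^t) J'^t = (J'R) E^{-1} (J'R)^t = (RJ) E^{-1} (RJ)^t = R (J E^{-1} J^t) R^t = R E^{-1} R^t ,
\end{equation*}
and inverting (using $J'^{-1} = -J'$) gives $J'^t E' J' = E'$, hence $J'^t (nE') J' = nE'$. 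Finally $i \Pi' (nE')^{-1} \Pi'^* = \tfrac1n\, i \Pi' (R E^{-1} R^t) \Pi'^* = \tfrac1n\, T H T^* > 0$. Thus $nE'$ satisfies all the Riemann relations with respect to $\Pi'$ and so defines a polarization.

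I expect the only real subtlety to be bookkeeping: being careful that $T$ and $R$ denote the rectangular (restricted) matrices attached to the quotient, so that $T$ is genuinely surjective and $\Pi'$ genuinely a period matrix --- this is what makes $T H T^*$ strictly (rather than merely semi-) positive definite and forces $R E^{-1} R^t$ to be invertible. Conceptually, the formula $E' = (R E^{-1} R^t)^{-1}$ is explained by observing that $R E^{-1} R^t$ is the Gram matrix of the polarization induced, as in the preceding proposition, by the inverse form $E^{-1}$ on the dual abelian subvariety $B'^\vee \subseteq A^\vee$, so that passing to the inverse corresponds to dualizing back to $B'$; the integer $n$ simply accounts for the fact that $\det(R E^{-1} R^t)$ need not be $\pm 1$. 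No such interpretation is needed for the proof above.
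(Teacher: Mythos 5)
Your proof is correct, but it takes a genuinely different route from the paper's. The paper deduces the proposition from the preceding one (the case of an inclusion) by double duality: the projection~$(T,R)$ dualizes to an inclusion~$(T^{\vee}, R^{\vee})$ of~$(A')^{\vee}$ into~$A^{\vee}$, a multiple of~$E^{-1}$ is a polarization on~$A^{\vee}$, the previous proposition transfers it to~$(A')^{\vee}$ as~$R E^{-1} R^t$ (using~$R^{\vee} = R^t$), and dualizing once more yields the inverse matrix. You instead verify the Riemann relations for~$nE'$ directly: the identity~$i\Pi'(RE^{-1}R^t)\Pi'^* = T(i\Pi E^{-1}\Pi^*)T^*$ together with injectivity of~$T^*$ gives strict positive definiteness (and, with the compatibility~$J'(RE^{-1}R^t)J'^t = RE^{-1}R^t$, the invertibility of~$RE^{-1}R^t$), and the relations~$RJ = J'R$ and~$JE^{-1}J^t = E^{-1}$ give compatibility of~$E'$ with~$J'$ after inverting. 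Your computation is more self-contained and makes explicit exactly where surjectivity of the projection is used --- the point you rightly flag as the ``only real subtlety'' --- whereas the paper's argument is shorter given the previous proposition and explains conceptually why the inverse appears in the formula (a point you also note in your closing remark). The only presentational quibble is that your nondegeneracy claim for~$RE^{-1}R^t$ in the first paragraph implicitly relies on the compatibility with~$J'$ established only in the second (one needs~$\Pi'(RE^{-1}R^t)\Pi'^t = 0$ to pass from invertibility of~$\Pi'(RE^{-1}R^t)\Pi'^*$ to that of~$RE^{-1}R^t$); since you prove both facts, this is a matter of ordering rather than a gap.
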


\begin{proof}
  This follows from the previous proposition by double duality. The projection~$(T, R)$ induces a dual map~$(T^{\vee}, R^{\vee}) : (A')^{\vee} \to A^{\vee}$ on the corresponding dual abelian varieties. Recall that we can identify~$A^{\vee} = V^{\vee} / \Lambda^{\vee}$, where~$V^{\vee} \colonequals \Hom_{\overline{\C}} (V, \C)$ is the set of antilinear maps from~$V$ to~$\C$ and where~$\Lambda^{\vee}$ is the dual of~$\Lambda$ under the canonical pairing between~$V$ and~$V^{\vee}$. The original pairing~$E$ gives rise to a map~$V \to V^{\vee}$ sending the lattice~$\Lambda$ into its antilinear dual~$\Lambda^*$. In other words, we have
  \begin{equation}
    E (\Lambda \otimes \Q) = \Lambda^{\vee} \otimes \Q .
  \end{equation}
  Therefore a suitable multiple of the inverse map~$E^{-1} : V^{\vee} \to V$ maps~$\Lambda^{\vee}$ into its dual~$\Lambda$. This defines a polarization on~$A^{\vee}$; it is compatible with the complex structure because~$E$ is, and it is positive definite for the same reason. We can transfer this polarization to~$A'^{\vee}$ by the previous proposition, using the inclusion~$(T^{\vee}, R^{\vee})$ dual to the projection~$(T, R)$. Note that~$R^{\vee} = R^t$ by duality. Dualizing a second time, we get our induced polarization on~$A'$ after again taking a suitable integral multiple if needed.
\end{proof}

\begin{remark}
  If the morphism~$(T, R)$ in the previous proposition corresponds to a map of curves of degree~$d$ under the Torelli map, then~$E'$ is in fact~$d$ times the principal polarization on~$\Pi'$ by Birkenhake--Lange \cite[Lemma 12.3.1]{BL}. However, the above notion of an induced polarization applies whether~$(T, R)$ comes from a map of curves or not.
\end{remark}

We apply approach (ii) to the chosen polarization~$E$ and idempotent~$(T, R)$ for our period matrix~$\Pi \in \M_{4,8} (\C)$ and obtain a matrix~$\Pi_2 \in \M_{2,4} (\C)$ with a polarization~$E_2$ of type~$(1, 2)$, which defines an abelian surface~$A_2$. Note that the above approaches are essentially equivalent by duality; we slightly preferred the second as it comes with a projection map $A \to A_2$.

\section{Computing an equation for the cover} \label{sec:prym}

The~$(1, 2)$-polarized abelian surface~$A_2$ defined by~$\Pi_2$ and~$E_2$ in the previous section was obtained from a numerical idempotent of~$\Pi$ defined over the base field~$K = \Q (\sqrt{61})$. Therefore the abelian surface~$A_2$ is also defined over~$K$. However, the polarization~$E_2$ is not principal, so we do not obtain in this way a Jacobian of a genus~$2$ curve over~$K$.  In this section, we show how to realize~$A_2$ as the Prym variety associated to a map~$X \to A_1$ from a genus~$3$ curve to a genus~$1$ curve.

Along the way, we will continue our numerical computations and retain our sensitivity to fields of definition; at the end, we will rigorously certify our construction.

To obtain a principal polarization, one can first try to take a maximal isotropic subgroup of the kernel of the isogeny~$A \to A^{\vee}$ defined by~$E_2$. This kernel is isomorphic to~$\Z / 2 \Z \times \Z / 2 \Z$ and carries a non-trivial Weil pairing; there are therefore~$3$ such maximal isotropic groups. Calculating the corresponding principally polarized quotients of~$A_2$, one obtains that all of them give rise to indecomposable polarizations and therefore correspond to algebraic curves under Torelli. However, calculating the corresponding Igusa invariants, or alternatively reconstructing after Gu\`ardia \cite{Guardia} (implemented at \cite{Sijsling-curve}) shows that all of them define the same cubic Galois extension~$L$ of~$K$, namely that defined by the polynomial~$x^3 - \sqrt{61} x^2 + 4 x + 2$.

This cubic polynomial also shows the way out. We will glue~$A_2$ with an elliptic curve~$A_1$ over~$K$ to realize~$A_2$ as the Prym of genus-$3$ double cover of~$A_1$ over~$K$. This is a slight generalization of the same gluing operation when both~$A_2$ and~$A_1$ have a principal polarization, which will be considered in detail in the upcoming work by Hanselman \cite{Hanselman} (for code, see Hanselman--Sijsling \cite{HS-gluing}).

Via the aforementioned cubic polynomial, we consider the elliptic curve
\begin{equation}
  A_1 \colon y^2 = x^3 - \sqrt{61} x^2 + 4 x + 2 .
\end{equation}
Let~$\Pi_2$ be a period matrix for the~$2$-dimensional abelian variety~$A_2$ with respect to which the polarization~$E_2$ is represented by the alternating form
\begin{equation}
  E_2 =
  \begin{pmatrix}
    0 & -1 & 0 & 0 \\
    1 & 0 & 0 & 0 \\
    0 & 0 & 0 & -2 \\
    0 & 0 & 2 & 0 \\
  \end{pmatrix} .
\end{equation}
The elliptic curve~$A_1$ will have a principal polarization represented by the standard symplectic matrix~$E_1 = \left(\begin{smallmatrix} 0 & -1 \\ 1 & 0 \end{smallmatrix}\right)$, and with corresponding period matrix~$\Pi_1$ say.

Consider the period matrix
\begin{equation}\label{eq:Pp3}
  \Pi'_3 =
  \begin{pmatrix}
    \Pi_2 & 0 \\
    0 & \Pi_1
  \end{pmatrix} ,
\end{equation}
with corresponding abelian variety~$A'_3$; it admits a polarization represented by
\begin{equation}\label{eq:Ep3}
  E'_3 =
  \begin{pmatrix}
    E_2 & 0 \\
    0 & 2 E_1
  \end{pmatrix}
  =
  \begin{pmatrix}
    0 & -1 & 0 & 0 & 0 & 0 \\
    1 & 0 & 0 & 0 & 0 & 0 \\
    0 & 0 & 0 & -2 & 0 & 0 \\
    0 & 0 & 2 & 0 & 0 & 0 \\
    0 & 0 & 0 & 0 & 0 & -2 \\
    0 & 0 & 0 & 0 & 2 & 0 \\
  \end{pmatrix} .
\end{equation}

Let~$G \colonequals \ker(E'_3) < \frac{1}{2} \Lambda' / \Lambda'$, where~$\Lambda' = H_1 (A'_3, \Z) \cong \Pi'_3 \Z^6$. Then~$G$ is equipped with the basis corresponding to the final four columns of~$E'_3$, and under the corresponding identification
\begin{equation}\label{eq:G}
  G \simeq (\Z / 2 \Z)^2 \times (\Z / 2 \Z)^2
\end{equation}
the Weil pairing on~$G$ is the product pairing. By Birkenhake--Lange \cite[2.4.4]{BL}, the quotients of~$A'_3$ on which the polarization~$E'_3$ induces a principal polarization are in bijection with the maximal isotropic subgroups~$H < G$---there are~$15$ such subgroups.

\begin{proposition}
Suppose that~$A_3'$ and~$G$ are defined over~$K$.  Then there exists a maximal isotropic subgroup~$H < G$ that is defined over~$K$. In particular, the quotient~$A'_3 / H$ is defined over~$K$.
\end{proposition}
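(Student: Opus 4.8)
The plan is to construct the required subgroup $H$ as the graph of a Galois-equivariant symplectic isomorphism between the two symplectic $\FF_2$-planes that constitute $G$.

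First I would record the Galois-module structure of $G$. Since $\Pi'_3$ is block-diagonal and $A_2$ and $A_1$ are both defined over $K$, the hypothesis means that $A'_3 = A_2 \times A_1$ over $K$, with the two projections defined over $K$; hence $G = \ker(E'_3)$ decomposes Galois-equivariantly and orthogonally (for the product Weil pairing) as $G = G_2 \times G_1$, where $G_2 \colonequals \ker(A_2 \to A_2^{\vee})$ is the kernel of the $(1,2)$-polarization isogeny and $G_1 \colonequals A_1[2]$, each a nondegenerate symplectic $\FF_2$-plane. The point of this reformulation is that if $\phi \colon G_2 \xrightarrow{\sim} G_1$ is a Galois-equivariant isomorphism carrying the Weil pairing of $G_2$ to that of $G_1$, then its graph $H \colonequals \{\,(v,\phi(v)) : v \in G_2\,\} < G$ is Galois-stable, is $2$-dimensional over $\FF_2$, and is isotropic, since $\langle (v,\phi v),(w,\phi w)\rangle = \langle v,w\rangle + \langle \phi v,\phi w\rangle = 2\langle v,w\rangle = 0$ in characteristic $2$; it is therefore one of the $15$ maximal isotropic subgroups of $G$, and $A'_3/H$ descends to $K$.

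So everything reduces to producing such a $\phi$, and here I would use that $A_1$ was chosen precisely for this purpose. On the $A_2$ side, the three isotropic lines of $G_2$ are exactly the kernels of the three principal polarizations of $A_2$ considered above, and we found---by reconstructing the corresponding principally polarized quotients after Gu\`ardia and computing their Igusa invariants---that these three quotients are pairwise Galois-conjugate and all defined over the cyclic cubic field $L/K$ cut out by $x^3 - \sqrt{61}\,x^2 + 4x + 2$; equivalently, Galois acts on the three lines of $G_2$ through $\Gal(L/K) \cong \Z/3\Z$ as a $3$-cycle. On the $A_1$ side, the nonzero points of $A_1[2]$ have $y$-coordinate $0$ and $x$-coordinate a root of the \emph{same} polynomial $x^3 - \sqrt{61}\,x^2 + 4x + 2$, so the $2$-division field of $A_1$ is $L$ and Galois acts on $G_1 \setminus \{0\}$ again through $\Gal(L/K) \cong \Z/3\Z$ as a $3$-cycle. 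Since a faithful action of $\Z/3\Z$ on a nondegenerate symplectic $\FF_2$-plane is unique up to symplectic isomorphism---the two faithful representations $\sigma \mapsto \sigma^{\pm 1}$ differ by conjugation by a transposition in $\mathrm{Sp}_2(\FF_2) \cong S_3$, and any equivariant isomorphism of such planes is automatically symplectic because the invariant alternating form is unique up to scalar---there is a $\Gal(L/K)$-equivariant symplectic isomorphism $\phi \colon G_2 \to G_1$, and we are done.

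The main obstacle I anticipate is not the elementary group theory of the $15$ Lagrangians but the input that the Galois action on the three lines of $G_2$ is genuinely through the cyclic $\Z/3\Z$ cut out by that specific cubic, i.e.\ that the three principally polarized quotients of $A_2$ are $K$-conjugate with common field of definition $L$; this rests on the earlier numerical reconstruction, and it is also the reason the curve $A_1$ was selected with $2$-division field exactly $L$. Everything else---the graph construction, the characteristic-$2$ isotropy computation, and the uniqueness of the faithful $\Z/3\Z$-representation---is formal. As a rigorous alternative that bypasses this module-theoretic matching, one could instead read off from the computed period data the explicit action of a generator of $\Gal(L/K)$ on $G \cong \FF_2^4$ and then simply exhibit a stable $H$ among the fifteen by inspection.
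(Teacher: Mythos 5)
Your proposal is correct and is essentially the paper's own argument in different clothing: the paper's subgroup $H = \{(0,0)\} \cup \{(\sigma^i(x_1),\sigma^i(x_2))\}_i$ is exactly the graph of your Galois-equivariant isomorphism $\phi$ determined by $x_1 \mapsto x_2$, and both proofs rest on the same key input, namely that the $\Gal(L/K)$-module structures of $\ker(E_2)$ and $A_1[2]$ agree because $A_1$ was chosen via the same cubic $x^3 - \sqrt{61}x^2 + 4x + 2$. The only cosmetic difference is that you verify isotropy via symplecticity of $\phi$ while the paper directly notes that distinct nonzero elements of each factor pair to $1$, so the product pairing gives $1+1 \equiv 0 \pmod 2$.
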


\begin{proof}
  The first factor in \eqref{eq:G} corresponds to the kernel of the polarization~$E_2$ on~$A_2$. By the discussion at the beginning of this section, we know the structure of this group as a Galois module: the elements of order~$2$ are cyclically permuted by a generator of the Galois extension~$L$ of~$K$ defined by~$x^3 - \sqrt{61} x^2 + 4 x + 2$. This is the very same Galois module structure as that on the second factor in \eqref{eq:G}, which corresponds to~$A_1 [2]$.

  We can therefore obtain a Galois-stable isotropic subgroup~$H < G$ by starting with a pair~$(x_1, x_2)$ with~$x_1 \neq 0$ and~$x_2 \neq 0$ and taking the image under Galois, after which we adjoin~$0$:
  \begin{equation}
    H \colonequals \left\{ (0, 0), (x_1, x_2), (\sigma (x_1), \sigma (x_2)), (\sigma^2 (x_1), \sigma^2 (x_2)) = (x_1 + \sigma (x_1), x_2 + \sigma (x_2)) \right\} .
  \end{equation}
  Since~$\sigma_i (x_1)$ and~$\sigma_j (x_1)$ pair to~$1$ for~$i \neq j$ and similarly for~$x_2$, this subgroup is indeed isotropic, because  the product pairing on~$G$ assumes the values~$0 + 0 = 0$ and~$1 + 1 \equiv 0 \pmod{2}$. By construction,~$H$ is defined over~$K$.

  The second part of the proposition follows immediately.
\end{proof}

Given a maximal isotropic subgroup~$H$ of~$G$, say with generators~$h_1$ and~$h_2$, we take inverse images~$\widetilde{h}_1$ and~$\widetilde{h}_2$ in~$\frac{1}{2} \Lambda'$. This yields a lattice
\begin{equation}
  \Lambda = \Lambda' + \Z \widetilde{h}_1 + \Z \widetilde{h}_2 .
\end{equation}
that contains~$\Lambda'$. Choosing a basis of~$\Lambda$, and keeping the basis for~$\Lambda'$ used heretofore, we find a matrix~$R_3 \in M_6 (\Z)$ such that~$\Lambda R_3 = \Lambda'$. By construction,~$E'_3$ induces a principal polarization on the abelian variety corresponding to the period matrix
\begin{equation}
  \Pi_3 = \Pi'_3 R_3^{-1} .
\end{equation}
Note that we have a projection map corresponding to the pair~$(I_3, R_3)$ in the equality~$I_3 \Pi'_3 = \Pi_3 R_3$. Under this map, the induced principal polarization on~$\Pi_3$ corresponds to the polarization class from Proposition \ref{prop:caseii}.

While we can in principle identify the subgroup~$H$ in terms of the columns of~$E'_3$ by using the Abel--Jacobi map from \cite{MN}, we just tried all~$15$ subgroups, as in Algorithm \ref{alg:goodquot}.

\begin{breakablealgorithm}
  \caption{Finding a principally polarized quotient over~$K$ that is a Jacobian}
  \label{alg:goodquot}
  \begin{flushleft}
    \textbf{Input:} The period matrices~$\Pi_2$ and~$\Pi_1$ for the factors~$A_2$ and~$A_1$ over~$K$, with polarizations~$E_2$ and~$E_1$ of type~$(1, 2)$ and~$(1)$ respectively, along with the associated matrices~$\Pi'_3$ and~$E'_3$ as in \eqref{eq:Pp3} and \eqref{eq:Ep3}.

    \textbf{Output:} A matrix~$\Pi_3 = \Pi'_3 R_3^{-1}$ on which~$E'_3$ induces a principal polarization~$E_3$ such that~$\Pi_3$ numerically corresponds to a genus-$3$ curve~$X$ over~$K$, along with~$E_3$ and the invariants of~$X$.

    \textbf{Algorithm:}
  \end{flushleft}
  \begin{enumalg}
    \item\label{alg:find_ppal_pol_Jac:step1} Determine the~$15$ isotropic subgroups~$H$ of~$G = \ker (E'_3) \cong (\Z / 2 \Z)^2 \times (\Z / 2 \Z)^2$ with the product Weil pairing.
    \item For a given~$H$ as in Step \ref{alg:find_ppal_pol_Jac:step1}, lift generators~$h_1$ and~$h_2$ to elements~$\widetilde{h}_1$ and~$\widetilde{h}_1$ of~$\frac{1}{2} \Lambda' / \Lambda'$.
    \item\label{alg:find_ppal_pol_Jac:step3} Construct~$\Lambda = \Lambda' + \Z \widetilde{h}_1 + \Z \widetilde{h}_2$, choose a basis of~$\Lambda$, and find~$R_3 \in M_6 (\Z)$ such that~$\Lambda R_3 = \Lambda'$.
    \item Let~$\Pi_3 = \Pi'_3 R_3^{-1}$, and let~$E_3 = R_3^{-t} E'_3 R_3^{-1}$.
    \item Using the methods of \cite[\S 2]{Ketal} and LLL, check if the principally polarized abelian variety defined by~$\Pi_3$ and~$E_3$ corresponds to an algebraic curve~$X$, and if so, whether the invariants of~$X$ are numerically in~$K$. If so, return~$\Pi_3$,~$E_3$, and the corresponding invariants. Otherwise proceed to the next~$H$ in Step \ref{alg:find_ppal_pol_Jac:step1}.
  \end{enumalg}
\end{breakablealgorithm}

Invoking this calculation using 1000 decimal digits, only one isotropic subgroup numerically gave rise to invariants over~$K$; we obtain in this way a period matrix~$\Pi_3$ whose abelian variety~$A_3 \colonequals \C^3/\Pi_3\Z^6$ is equipped with the principal polarization~$E_3$.

Finally, using the main result in Lercier--Ritzenthaler--Sijsling \cite{LRS} (recognizing invariants), we constructed a plane quartic equation defining a genus~$3$ curve whose Jacobian (numerically) matches the abelian surface~$A_3$.

\section{Simplifying, twisting, and verifying the cover} \label{sec:simplify}

The methods from the previous section furnish us with an equation~$F (x, y, z) = 0$ defining a smooth plane quartic curve~$X \subset \PP^2$ over~$K$ whose Jacobian putatively contains the abelian surface~$A=A_2$ and the elliptic curve~$E=A_1$ in Theorem \ref{thm:main} as isogeny factors over~$\Qbar$. Since this reconstruction from invariants is only up to~$\Qbar$-isomorphism, this Jacobian may not yet contain the given factors over~$K$.  Moreover, the implementation of these methods does not yet optimize the equations involved when working over proper extensions of~$\Q$; indeed, the first homogeneous defining polynomial~$F$ is of a rather terrible form, whose coefficients are thousands upon thousands of decimal digits long.  In this section, we simplify, twist, and then verify that the cover is as claimed.

\subsection*{Simplification}

Since the ring of integers of~$K$ has class number~$1$, a first step is to remove superfluous primes from the discriminant by generalizing methods of Elsenhans \cite{Elsenhans}. This approach makes the equation somewhat smaller, but its size is still thousands of digits.

Although we constructed~$X$ from a period matrix~$\Pi$, the reconstruction methods involved only uses invariants and does not
furnish the period matrix of~$\Jac(X)$ in terms of~$\Pi$. Fortunately, using integration algorithms due to Neurohr \cite{Neurohr}, we can simply recompute the period matrix of~$\Jac(X)$ from scratch. Then, with the techniques of Bruin--Sijsling--Zotine \cite{BSZ} one can find the numerical geometric automorphism group~$\Aut(X_\C)$ of the curve~$X$ along with its tangent representation on the ambient~$\PP^2$.  It turns out that~$\Aut(X_\C) \simeq \Z / 2 \Z$, and in particular the nontrivial involution~$\iota$ generating~$\Aut(X_\C)$ is putatively defined over~$K$ itself.

Let~$T \in \GL_3 (K)$ be a matrix representing~$\iota$, i.e.,~$F \cdot T = \lambda F$ under the substitution acting on the right.  Since~$X$ is canonically embedded, we can read off the matrix~$T$ from its tangent representation.  Moreover, we can write~$T = U D U^{-1}$, where~$D$ is a diagonal matrix with diagonal entries~$(-\mu, \mu, \mu)$ say. Then for~$H = F \cdot U$ we have~$H \cdot D = F \cdot U D = \lambda F \cdot U = \lambda H$, so that~$H (-x, y, z) = \lambda H (x, y, z)$. In this way, we recognize an exact automorphism and reduce to the case where
\begin{equation}
  F (x, y, z) = G (x^2, y, z) ,
\end{equation}
with~$G$ homogeneous in~$x,y,z$ of weights~$2, 1, 1$. Now the equation~$G (x, y, z) = 0$ defines a genus-$1$ curve~$E$, which is the quotient of~$X$ by its involution~$\iota$ and which is still defined over~$K$. The map~$X \to E$ is ramified over the divisor of zeros~$D$ of~$x$ on~$E$.

We then calculate the~$j$-invariant of the binary quartic~$q (y, z)$ corresponding to the genus-$1$ curve~$E$. An elliptic curve~$E_0$ with the same~$j$-invariant over~$K$ is defined by
\begin{equation}
  E_0 \colon u^2 = p_0(t)
\end{equation}
over~$K$, where
\begin{equation} \label{eqn:p0eqn}
  p_0(t) \colonequals (10\nu+34) (t^3+\nu t^2 + (1-\nu)t+1)
\end{equation}
where~$\nu \colonequals (1+\sqrt{61})/2 \in K$ satisfies~$\nu^2  = \nu + 15$.  The conductor of~$E_0$ is~$(64)$.  As luck would have it, the binary quartic~$q$ is equivalent over~$K$ to the quartic~$z^4 p_0 (y/z)$ corresponding to the polynomial~$p_0$, yielding an isomorphism~$E \xrightarrow{\sim} E_0$. Pushing the divisor~$D$ along this isomorphism, one obtains a divisor~$D_0$ on~$E_0$. Again luck is on our side and the divisor~$D_0 - 4 \infty$ on~$E_0$ is principal; the corresponding degree-$2$ cover~$X_0$ of~$E_0$ is defined by the affine equations
\begin{equation}
  X_0 \colon \left\lbrace
  \begin{array}{ll}
    (10\nu+34)u^2 &\!\!\!\!= t^3+\nu t^2 + (1-\nu)t+1 \\
    (-2\nu+9)w^2 &\!\!\!\!= 2u + (-7\nu - 24)t^2 + (4\nu + 14)t- 2\nu - 7
  \end{array}
  \right.
\end{equation}
Eliminating~$u$ using the second equation and substituting in the first, replacing~$w \leftarrow x$ and~$t \leftarrow y$, and homogenizing yields the defining homogeneous quartic
\begin{equation} \label{eqn:tnuwyuck}
  \begin{aligned}
    &F_0 (x,y,z) = (-195\nu + 859)x^4 + (-64\nu + 282)x^2y^2 + (-5\nu + 24)y^4 \\
    &\qquad\qquad\qquad + (148\nu - 652)x^2yz + (-16\nu + 68)y^3z + (-74\nu + 326)x^2z^2 \\
    &\qquad\qquad\qquad + (96\nu - 422)y^2z^2 + (-148\nu + 652)yz^3 + (-47\nu + 207)z^4.
  \end{aligned}
\end{equation}

We are blessed for one final time because~$F_0$ defines a non-singular curve~$X_0 \subset \PP^2$ with the same Dixmier--Ohno invariants as the original terrible curve~$X$. (This is not automatic, as a degree~$2$ cover of a genus-$1$ curve is not determined by its ramification locus only, in contrast to the case of genus-$0$ curves.) In any event, we have finally obtained a simplified equation~$X_0$.

\subsection*{Twisting}

It remains to find the correct twist of~$X_0$ (with respect to the involution on~$X_0$) that matches~$A_2$ as an isogeny factor \emph{over~$K$} (instead of merely over~$\Qbar$).  Again, this step is needed because the algorithms in Lercier--Ritzenthaler--Sijsling \cite{LRS} only yield the correct geometric isomorphism class.  We proceed as follows.

First, we considering the scheme over~$\Z$ cut out by \eqref{eqn:tnuwyuck} and the equation for~$\nu$.  
By a Gr\"obner basis computation, we find that~$X_0$ has bad reduction only at the prime~$(2)$; so our desired twist will be by a~$2$-unit (in~$K^\times$).  We begin by computing generators for the group~$U \colonequals \Z_K[1/2]^\times/\Z_K[1/2]^{\times 2}$ of~$2$-units of~$K$ modulo squares, a group with~$2$-rank~$3$.

To find the right twist, we loop over primes~$p \geq 5$ that split in~$K$ and by counting points compare the~$L$-polynomial~$L_p(X,T) \in 1+T\Z[T]$ of~$X$ and the absolute~$L$-polynomial of~$f$
\[ L_p(f,T) \colonequals (1-a_pT + \chi(p)pT^2)(1-\tau(a_p)T+\chi(p)T^2) \in 1 + T\Z[T], \]
where~$\langle \tau \rangle = \Gal(K\,|\,\Q)$.  We find always that~$L_p(f, \pm T) \mid L_p(X,T)$ for a unique choice of sign~$\eps_p$.

We then look for an element~$\delta \in U$ such that~$\left(\displaystyle{\frac{\delta}{\p}}\right) = \eps_p$ for each prime~$\p$ above~$p$: we find a unique match, namely~$\delta = -5\nu + 22$, an element of norm~$-1$.  Twisting by this element (replacing~$x^2 \leftarrow \delta^{-1} x^2$) gives an equation all of whose good Euler factors match:
\begin{equation} \label{eqn:Fxyz-madeit}
  \begin{aligned}
    F(x,y,z) & \colonequals (5\nu + 17)x^4 - (14\nu + 48)x^2y^2 + (8\nu + 28)x^2yz   \\
            &\qquad - (4\nu + 14)x^2z^2 + (10\nu + 33)y^4 - (12\nu + 44)y^3z \\
            &\qquad + (2\nu + 26)y^2z^2 + (4\nu - 16)yz^3 + (\nu - 6)z^4.
  \end{aligned}
\end{equation}

\subsection*{Verification}

We now conclude with the proof of our main result (Theorem \ref{thm:main}); for convenience, we reproduce the statement.

\begin{thm}
Let~$X$ be the projective plane curve over~$K$ defined by the equation~$F(x,y,z)=0$ as in \eqref{eqn:Fxyz-madeit}.
  Then the following statements hold.
  \begin{enumalph}
    \item The curve~$X$ is smooth of genus~$3$ and has bad reduction at the prime~$(2)$ only.
    \item The curve~$X$ admits an involution~$\iota \colon (x: y: z) \mapsto (-x: y: z)$ over~$K$, and the quotient of~$X$ by~$\iota$ defines a map from~$X$ onto an elliptic curve~$E$ of conductor~$(64)$. An equation for~$E$ is given by
       \[ -(10\nu+34)y^2 = x^3 - \nu x^2 + (-\nu+1)x - 1. \]
    \item Up to isogeny over~$K$, we have~$\Jac(X) \sim A \times E$, where~$A$ is the Prym variety of the natural map $\Jac (X) \to \Jac (E)$, an abelian surface over~$K$ with everywhere good reduction.
    \item All geometric endomorphisms of~$A$ are defined over~$K$, and~$\End(A) \simeq \Z[\sqrt{3}]$ so $A$ is of~$\GL_2$-type over~$K$.  Moreover,~$A$ is isogenous to~$A_{61}$ over~$K$.
    \item The abelian surface~$A$ has a polarization over~$K$ of type~$(1,2)$, but no abelian surface isogenous to~$A$ over~$K$ admits a principal polarization over~$K$.
  \end{enumalph}
\end{thm}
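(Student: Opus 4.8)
The plan is to bootstrap everything from the explicit equation~\eqref{eqn:Fxyz-madeit}, verifying the five assertions essentially in the stated order. For part~(a), observe that $F$ involves $x$ only through $x^2$ and $x^4$, and a Gr\"obner-basis elimination over $\Z_K$ (as in the twisting step) shows that the discriminant of the ternary quartic $F$ is a $2$-unit; hence the scheme $X \subset \PP^2_{\Z_K}$ is smooth over $\Z_K[1/2]$, so $X_K$ is a smooth plane quartic, a curve of genus $\binom{3}{2} = 3$ with good reduction at every prime $\mathfrak p \nmid 2$. That the reduction at $(2)$ is genuinely \emph{bad} falls out of (b)--(c): since $E$ has conductor $(64)$ and hence bad reduction at $(2)$, were $X$ to have good reduction there, then $\Jac X$, and with it its quotient $E$, would too, contradicting N\'eron--Ogg--Shafarevich. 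For part~(b), the identity $F(-x,y,z) = F(x,y,z)$ is immediate, so $\iota$ is an automorphism defined over $K$; its fixed subring is generated by $x^2, y, z$, and writing $F = G(x^2,y,z)$ exhibits the quotient $X/\iota$ as the genus-$1$ curve $\{G = 0\}$, which the simplification of Section~\ref{sec:simplify} identifies over $K$ with the displayed Weierstrass curve $E$; since $61 \equiv 5 \pmod{8}$ the prime $(2)$ is inert, $K_{(2)}/\Q_2$ is unramified, and Tate's algorithm there returns conductor exponent $6$, i.e.\ $\mathfrak N(E) = (64)$.

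For part~(c), the degree-$2$ cover $\pi \colon X \to E$ induces over $K$ a pushforward $\pi_* \colon \Jac X \to \Jac E$ and a pullback $\pi^*$ with $\pi_* \pi^* = [2]$, so $\Jac X \sim E \times A$ over $K$ with $A = (\ker \pi_*)^\circ$ the Prym, of dimension $3 - 1 = 2$; concretely $A$ is the surface $A_2$ produced in Section~\ref{sec:block}, and by~(a) and N\'eron--Ogg--Shafarevich it has good reduction away from $(2)$. Good reduction \emph{at} $(2)$ is the one assertion not delivered directly by the plane model, and I would deduce it from the isogeny $A \sim A_{61}$ of part~(d): since $A_{61}$ is an isogeny factor over $K$ of the modular abelian variety attached to the level-$61$ newform $f = \mfref{61.2.b.a}$, whose conductor over $\Q$ is a power of $61$, it has good reduction at every prime of $K$ not above $61$ and in particular at $(2)$, whereas good reduction at the prime above $61$ is precisely the (unconditional) everywhere-good-reduction statement recorded for $A_{61}$ by Demb\'el\'e--Kumar~\cite{DK} and Demb\'el\'e~\cite{Dembele}.

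For part~(d), the numerical endomorphism computation of Section~\ref{sec:block} gives $\End(A_\C) \supseteq \Z[\sqrt 3]$; to make this exact one exhibits the $\sqrt 3$-endomorphism as an honest $K$-rational correspondence (traced through the gluing construction, or transported from the Hecke action on $A_{61}$) and uses that $\Z[\sqrt 3]$ is the maximal order of $\Q(\sqrt 3)$ and that $A$ is geometrically simple, so that $\End(A) = \Z[\sqrt 3]$ with all geometric endomorphisms already defined over $K$; a $2$-dimensional abelian variety acted on by a real quadratic ring is of $\GL_2$-type. For the isogeny $A \sim A_{61}$ over $K$, the twisting step has already verified $L_{\mathfrak p}(A,T) = L_{\mathfrak p}(A_{61},T)$ for all primes $\mathfrak p$ of $K$ above a split rational prime up to a large bound; since both surfaces are of $\GL_2$-type over $K$ with the same coefficient field and with ramification confined to $(2)$ (and the residual characteristic), a Faltings--Serre/Livn\'e argument over $K$ pins down the associated $\lambda$-adic representations from finitely many Frobenius traces, and Faltings' isogeny theorem then yields $A \sim A_{61}$ over $K$. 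For part~(e), $A = A_2$ was constructed in Section~\ref{sec:block} carrying a polarization of type $(1,2)$ defined over $K$ (the idempotent, and hence the induced polarization of Proposition~\ref{prop:caseii}, being $K$-rational); and since $A \sim A_{61}$ over $K$ by~(d), the assertion that no $K$-isogenous abelian surface is principally polarizable over $K$ is exactly the one established for $A_{61}$ by Demb\'el\'e--Kumar~\cite[Remark~3]{DK} via the criterion of Gonz\'alez--Gu\`ardia--Rotger~\cite{GGR}.

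The step I expect to be the main obstacle is precisely the rigorous upgrade of the numerical data to statements over $K$: proving $\End(A) \simeq \Z[\sqrt 3]$ exactly (exhibiting the $\sqrt 3$-endomorphism integrally and bounding $\End(A)$ from above, for instance by reduction modulo an auxiliary prime of good reduction) and proving $A \sim A_{61}$ over $K$ by a Faltings--Serre-type argument, which requires explicit a priori control of the ramification of the relevant $\lambda$-adic representations and the verification of sufficiently many Frobenius traces. Everything else is a finite computation with the explicit equation~\eqref{eqn:Fxyz-madeit} or a formal consequence of Prym theory and the cited polarization criteria.
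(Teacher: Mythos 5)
Your treatment of parts (a)--(d) follows essentially the same route as the paper: smoothness away from $(2)$ via a Gr\"obner/discriminant computation, the quotient $X \to E$ coming from $F(x,y,z)=G(x^2,y,z)$, the Prym decomposition $\Jac(X) \sim A \times E$, rigorous certification of the endomorphism by $\sqrt{3}$ (the paper uses the methods of \cite{CMSV} to certify $\End(\Jac(\Xbar)) \simeq \Z \times \Z[\sqrt{3}]$, all defined over $K$), and a Faltings--Serre argument at the prime $\frakl=(1+\sqrt{3})$ identifying $\rho_{A,\frakl}$ with $\rho_{f_K,\frakl}$, from which both the isogeny $A \sim A_{61}$ and good reduction at $(2)$ follow because the base change $f_K$ has level $(1)$. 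Your derivation of \emph{bad} reduction of $X$ at $(2)$ from the conductor of $E$ is a reasonable variant of the paper's direct computation $N \cap \Z[\nu]=(2)^{14}$, and your ordering (isogeny first, then good reduction at $(2)$) is a harmless permutation of the paper's.

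The genuine gap is in part (e). The claim is about the \emph{entire} $K$-isogeny class of $A$, and principal polarizability is not an isogeny invariant, so it cannot be disposed of by citation alone: Demb\'el\'e--Kumar \cite[Remark 3]{DK}, via the criterion of \cite{GGR}, show only that the \emph{particular} surface $A_{61}$ is not principally polarized. You still need to rule out a principal polarization on every $A'$ isogenous to $A$ over $K$, and the paper devotes a separate argument to exactly this. Namely, given an isogeny $\varphi \colon A' \to A$ and the $(1,2)$-polarization $\lambda$ on $A$, the pullback $\mu' \colonequals \varphi^{\vee} \circ \lambda \circ \varphi$ is a polarization on $A'$ of degree $2\deg(\varphi)^2$; if $A'$ also carried a principal polarization, then by \cite[Theorem 2.3, Theorem 2.10(ii)]{GGR} the polarization $\mu'$ would correspond to a totally positive element of the order $\End(A') \subset \Q(\sqrt{3})$ of norm $2\deg(\varphi)^2$. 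Dividing by $\deg(\varphi)$, this would make $2$ a norm from $\Q(\sqrt{3})$, which it is not (the conic $x^2-3y^2=2z^2$ has no rational points; equivalently the quaternion algebra $(2,3\,|\,\Q)$ is nonsplit). Without this step, or some substitute for it, part (e) is not proved.
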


\begin{proof}
  Part (a) was verified directly via a Gr\"obner basis computation as above: more precisely, we compute that the ideal~$N$ generated by~$F$ and its derivatives in~$\Z[\nu][x,y,z]$, saturated at the irrelevant ideal~$(x,y,z)$, has~$N \cap \Z[\nu] = (2)^{14}$.

  Part (b) is because~$F(x,y,z)=G(x^2,y,z)$, and~$G(x,y,z)$ defines an elliptic curve, equipped with the point at infinity~$(\nu - 3 : 1 : 0)$; we then compute the given equation (arising up to quadratic twist by construction).  For the first part of (c), the splitting over~$K$ comes from the map in (b).

  Next, we prove part (d) using the methods of Costa--Mascot--Sijsling--Voight \cite{CMSV}, which rigorously certifies that~$\End(\Jac(X))=\End(\Jac(X^{\textup{al}})) \simeq \Z \times \Z[\sqrt{3}]$, where~$X^{\textup{al}}$ denotes the base change to an algebraic closure~$K^{\textup{al}}$ of~$K$.  More precisely, with respect to a~$K$-basis of differentials, we certify that there is an endomorphism with tangent representation
  \begin{equation}
    \begin{pmatrix}
      1 & 0 & 0 \\
      0 & 3/2 & (\nu - 4)/2 \\
      0 & (\nu + 3)/2 & -3/2
    \end{pmatrix}
  \end{equation}
  representing multiplication by~$\sqrt{3}$.

  To finish the proof of (c), frustratingly it is not clear how to algorithmically verify that the Prym has everywhere good reduction directly.  (We could not find an implementation for the computation of regular models over number fields.)  Instead, we look at Galois representations and apply the now standard method of Faltings--Serre, the relevant details of which we now sketch: see Dieulefait--Guerberoff--Pacetti \cite[\S 4]{DGP} and more generally Brumer--Pacetti--Poor--Tornar\'ia--Voight--Yuen \cite[\S 2]{BPPTVY}.

  The~$2$-adic Tate module of the abelian surface factor yields a Galois representation
  \begin{equation}
    \rho_{A,2} \colon \Gal_K \to \GL_2(\Z_2[\sqrt{3}]),
  \end{equation}
  where~$\Gal_K \colonequals \Gal(K^{\textup{al}}\,|\,K)$ is the absolute Galois group of~$K$; since~$(2)=\frakl^2$ ramifies in~$\Z[\sqrt{3}]$, reducing modulo~$\frakl=(1+\sqrt{3})$ gives the mod~$\frakl$ representation~
  \[ \overline{\rho}_{A,\frakl} \colon \Gal_K \to \GL_2(\F_2). \]  
  Counting points, we find that the conjugacy classes of Frobenius elements for either prime above~$5$ has order~$3$, so the image of~$\overline{\rho}_{A,\frakl}$ has order~$3$ or~$6$, and correspondingly the fixed field of~$\ker \overline{\rho}_2$ is an extension of~$K$ of degree~$3$ or~$6$ ramified only at~$(2)$.  By computational class field theory, we verify there is in fact a \emph{unique} such extension: it is the normal closure of the extension of~$K$ defined by the polynomial~$p_0(t)$ in \eqref{eqn:p0eqn}, whose absolute field is defined by
  \begin{equation}
    x^6 + x^5 - 7x^4 + 14x^2 - 16x + 4;
  \end{equation}
  the full normal extension~$L$ of~$\Q$ is defined by
  \begin{equation} \label{eqn:fullgalQ}
    x^{12} - 4x^{11} + 10x^{10} - 44x^9 + 174x^8
      - 422x^7 + 618x^6 - 512x^5 + 170x^4 + 56x^3 - 58x^2 + 6x + 9.
  \end{equation}

  We repeat the same calculation for the newform~$f \in S_2(\Gamma_0(61),\chi)$: its base change~$f_K$ to~$K$ has level~$(1)$ and so gives a Galois representation~$\rho_{f_K,\frakl} \colon \Gal_K \to \GL_2(\Z_2^{\textup{al}})$ for which the characteristic polynomial of Frobenius lies in~$\Z_2[\sqrt{3}]$.  Reducing modulo~$\frakl$ and semisimplifying gives~$\overline{\rho}_{f_K,\frakl}^{\textup{ss}} \colon \Gal_K \to \GL_2(\F_2^{\textup{al}})$; since the traces belong to~$\F_2$, up to equivalence the representation takes values in~$\F_2$ \cite[Corollary 1]{Taylor}, and as in the previous paragraph we confirm that it is surjective so absolutely irreducible so equal to its semisimplification: indeed, we must have~$\overline{\rho}_{A,\frakl} \simeq \overline{\rho}_{f_K,\frakl}$ by uniqueness.  By a result of Carayol \cite[Th\'eor\`eme 2]{Carayol}, up to equivalence the representation takes values in $\Z_2[\sqrt{3}]$, i.e., 
we have~$\rho_{f_K,\frakl} \colon \Gal_K \to \GL_2(\Z_2[\sqrt{3}])$.

  We conclude by showing that~$\rho_{A,\frakl} \simeq \rho_{f_K,\frakl}$, from which the remainder of part (c) follows as~$f_K$ has level~$(1)$.  We compute~$\Z_L[1/2]^{\times}/\Z_L[1/2]^{\times 2} \simeq (\Z/2\Z)^{9}$ and that the primes of~$L$ above~$3,5,61,97$ generate the corresponding elementary~$2$-abelian extension of~$L$.  The method of Faltings--Serre then provides that~$\rho_{A,\frakl} \simeq \rho_{f_K,\frakl}$ if and only if we have the equality of traces~$\tr \rho_{A,\frakl}(\Frob_\frakp)=\tr \rho_{f_K,\frakl}(\Frob_\frakp)$ for the primes~$\frakp$ of $K$ above $3,5,61,97$, where $\Frob_\frakp \in \Gal_K$ denotes (a representative of) the conjugacy class of the $\frakp$-Frobenius automorphism.  Having checked equality of~$L$-polynomials for these primes (indeed, we checked all primes of norm up to~$200$), the result follows.

  Finally, having proven in the previous paragraph that~$A$ is modular, part (e) was proven for a particular abelian surface in the isogeny class of~$A$ by Demb\'el\'e--Kumar \cite[Remark 3]{DK} (using a criterion of Gonz\'alez--Gu\`ardia--Rotger \cite{GGR}).  We now show it extends to all abelian surfaces in the isogeny class.  We have exhibited a~$(1,2)$-polarization~$\lambda \colon A \to A\spcheck$, where~$A\spcheck$ denotes the dual abelian variety.  
  Assume for purposes of contradiction that there exists an isogeny~$\varphi \colon A' \to A$ and a \emph{principal} polarization~$\lambda'$ on~$A'$, both over~$K$.  Then~
  \[ \mu' \colonequals \varphi\spcheck \circ \lambda \circ \varphi \colon A' \to (A')\spcheck \]
  is another polarization, with~$\deg(\mu') = 2\deg(\varphi)^2$.  From work of Gonz\'alez--Gu\`ardia--Rotger \cite[Theorem 2.3, Theorem 2.10(ii)]{GGR}, we conclude that~$\mu'$ corresponds to a \emph{totally positive} element of~$\End(A')$ of norm~$\deg(\mu')$.  But~$\End(A')$ is an order in~$\End(A') \otimes \Q = \Q(\sqrt{3})$, and~$2$ is not a norm from this field---equivalently, the conic~$x^2-3y^2=2z^2$ has no rational points over~$\Q$, or also equivalently, the corresponding quaternion algebra~$(2,3\,|\,\Q) \not\simeq \M_2(\Q)$ is not split, having discriminant~$6$---a contradiction.  (The element~$1+\sqrt{3} \in \Z[\sqrt{3}]$ has norm~$-2$, and~$(2)$ represents the nontrivial element of the narrow class group of~$\Z[\sqrt{3}]$.)
\end{proof}


\begin{thebibliography}{99}

\bibitem{Abrashkin}
V.~A.~Abrashkin, \emph{Galois modules of group schemes of period~$p$ over the ring of Witt vectors}, translated from Izv.\ Akad.\ Nauk SSSR Ser.\ Mat.\ \textbf{51} (1987), no.\ 4, 691--736, 910, \emph{Math.\ USSR-Izv.} \textbf{31} (1988), no.\ 1, 1--46.

\bibitem{BSSVY}
Andrew~R. Booker, Jeroen Sijsling, Andrew~V. Sutherland, John Voight, and Dan Yasaki, \emph{A database of genus-2 curves over the rational numbers,} LMS J.\ Comput.\ Math.\ \textbf{19} (Special Issue A) (2016), 235--254.

\bibitem{BK}
Armand Brumer and Kenneth Kramer, \emph{Certain abelian varieties bad at only one prime}, Algebra Number Theory \textbf{12} (2018), no.~5, 1027--1071.

\bibitem{BPPTVY}
Armand Brumer, Ariel Pacetti, Cris Poor, Gonzalo Tornar\'ia, John Voight, and David S.\ Yuen, \emph{On the paramodularity of typical abelian surfaces}, Algebra \& Number Theory \textbf{13} (2019), no.~5, 1145--1195.

\bibitem{BL}
Christina Birkenhake and Herbert Lange, \emph{Complex abelian varieties}, volume 302 of \emph{Grundlehren der Mathematischen Wissenschaften [Fundamental Principles of Mathematical Sciences]}, Springer-Verlag, Berlin, second edition, 2004.

\bibitem{Magma}
Wieb Bosma, John Cannon, and Catherine Playoust, \emph{The Magma algebra system.\ I.\ The user language}, J.\ Symbolic Comput.\ \textbf{24} (1997), vol.\ 3--4, 235--265.

\bibitem{BSZ}
Nils Bruin, Jeroen Sijsling, and Alexandre Zotine, \emph{Numerical computation of endomorphism rings of Jacobians}, Proceedings of the Thirteenth Algorithmic Number Theory Symposium, Open Book Series 2 (2019), 155--171.

\bibitem{Carayol}
Henri Carayol, \emph{Formes modulaires et repr\'esentations galoisiennes \`a valeurs dans un
anneau local complet},~$p$-adic monodromy and the Birch and Swinnerton-Dyer conjecture (Boston, MA, 1991), Contemp.\ Math., vol.\ 165, Amer.\ Math.\ Soc., Providence, RI, 1994, 213--237.

\bibitem{CMSV}
Edgar Costa, Nicolas Mascot, Jeroen Sijsling, and John Voight, \emph{Rigorous computation of the endomorphism ring of a Jacobian}, Math.\ Comp.\ \textbf{88} (2019), 1303--1339.

\bibitem{Dembele}
Lassina Demb\'el\'e, \emph{Abelian varieties with everywhere good reduction over certain real quadratic fields with small discriminant}, 2019, preprint.

\bibitem{DK}
Lassina Demb\'el\'e and Abhinav Kumar, \emph{Examples of abelian surfaces with everywhere good reduction}, Math.\ Ann.\ \textbf{364} (2016), no.\ 3--4, 1365--1392. 

\bibitem{DGP}
Luis Dieulefait, Lucio Guerberoff, and Ariel Pacetti, \emph{Proving modularity for a given elliptic curve over an imaginary quadratic field}, Math.\ Comp.\ \textbf{79} (2010), no.\ 270, 1145--1170.

\bibitem{Elsenhans}
Andreas-Stephan Elsenhans, \emph{Good models for cubic surfaces}, preprint at \href{https://math.uni-paderborn.de/fileadmin/mathematik/AG-Computeralgebra/Preprints-elsenhans/red_5.pdf}{\url{https://math.uni-paderborn.de/fileadmin/mathematik/AG-Computeralgebra/Preprints-elsenhans/red\_5.pdf}}.

\bibitem{Faltings}
Gerd Faltings, \emph{Finiteness theorems for abelian varieties over number fields}, translated from  Invent.~Math.~\textbf{73} (1983), no.~3, 349--366; ibid.\ \textbf{75} (1984), no.\ 2, 381, Arithmetic geometry (Storrs, Conn., 1984), Springer, New York, 1986, 9--27.

\bibitem{Fisher}
Tom Fisher, \emph{How to trivialise an algebra}, 2007, \url{http://www.faculty.jacobs-university.de/mstoll/workshop2007/fisher2.pdf}.

\bibitem{Fontaine}
Jean-Marc Fontaine, \emph{Il n'y a pas de vari\'et\'e ab\'elienne sur~$\Z$}, Invent.\ Math.\ \textbf{81} (1985), no.\ 3, 515--538.

\bibitem{GGR}
Josep Gonz\'alez, Jordi Gu\`ardia, and Victor Rotger, \emph{Abelian surfaces of~${\rm GL}_2$-type as Jacobians of curves}, Acta Arith.\ \textbf{116} (2005), no.\ 3, 263--287. 

\bibitem{Guardia}
Jordi Gu\`ardia, \emph{Jacobian nullwerte and algebraic equations}, J.\ Algebra \textbf{253} (2002), no.~1, 112--132.

\bibitem{Hanselman}
Jeroen Hanselman, \emph{Gluing curves along their torsion}, Ph.D.\ thesis, Universit\"at Ulm, in progress.

\bibitem{HS-gluing}
Jeroen Hanselman and Jeroen Sijsling, \emph{\texttt{gluing}: Gluing curves of low genus along their torsion}, \href{https://github.com/JRSijsling/gluing}{\texttt{https://github.com/JRSijsling/gluing}}.

\bibitem{IRS}
Ivanyos, G\'abor, Lajos R\'onyai, and Josef Schicho, \emph{Splitting full matrix algebras over algebraic number fields}, J.~Algebra \textbf{354} (2012), 211--223.

\bibitem{Ketal}
P\i{}nar K\i{}l\i \c{c}er, Hugo Labrande, Reynald Lercier, Christophe Ritzenthaler, Jeroen Sijsling, and Marco Streng, \emph{Plane quartics over~$\Q$ with complex multiplication}, Acta Arith.\ \textbf{185} (2018), no.\ 2, 127--156.

\bibitem{LRS}
Reynald Lercier, Christophe Ritzenthaler, and Jeroen Sijsling, \emph{Reconstructing plane quartics from their invariants}, Discrete\ Comput.\ Geom.\ (2018), \url{https://doi.org/10.1007/s00454-018-0047-4}.

\bibitem{LMFDB}
The LMFDB Collaboration, \emph{The~$L$-functions and Modular Forms Database}, \url{http://www.lmfdb.org}, 2019, [Online; accessed 30 July 2019].

\bibitem{MN}
Pascal Molin and Christian Neurohr, \emph{Computing period matrices and the Abel-Jacobi map of superelliptic curves}, Math.\ Comp.\ \textbf{88} (2019), no.\ 316, 847--888.

\bibitem{Neurohr}
Christian Neurohr, \emph{Efficient integration on Riemann surfaces \& applications}, Ph.D. thesis, Carl-von-Ossietzky-Universit\"at Oldenburg, 2018.

\bibitem{Nicolas1} Nicolas Mascot, \emph{Computing modular Galois representations}, Rendiconti Circ. Mat. Palermo, vol. 62 no 3 (2013), 451--476.

\bibitem{Nicolas2}
Nicolas Mascot, \emph{Companion forms and explicit computation of {$\operatorname{PGL}_2$}-number fields with very little ramification}, Journal of Algebra, vol.~509 (2018) 476--506.

\bibitem{Pyle}
Elisabeth~E.\ Pyle, \emph{Abelian varieties over {$\mathbb Q$} with large endomorphism algebras
  and their simple components over {$\overline{\mathbb Q}$}}, Modular curves and abelian varieties, Progr.\ Math., vol.~224, Birkh\"{a}user, Basel, 2004, 189--239.

\bibitem{Ribet-Twists}
Kenneth~A.\ Ribet, \emph{Twists of modular forms and endomorphisms of abelian varieties}, Math.\ Ann.\ \textbf{253} (1980), no.~1, 143--162.

\bibitem{Schoof}
Ren\'e Schoof, \emph{Abelian varieties over cyclotomic fields with good reduction everywhere}, Math.\ Ann.\ \textbf{325} (2003), no.~3, 413--448.

\bibitem{Sijsling-curve}
Jeroen Sijsling, \emph{\texttt{curve\_reconstruction}: Geometric and arithmetic reconstruction of curves from their period matrices}, \href{https://github.com/JRSijsling/curve_reconstruction}{\texttt{https://github.com/JRSijsling/curve\_reconstruction}}.

\bibitem{Taylor}
Richard Taylor, \emph{Representations of Galois groups associated to modular forms}, Proc.\ ICM (Z\"urich, 1994), vol.~2, Birkh\"auser, Basel, 1995, 435--442.

\end{thebibliography}
\end{document}